
\documentclass[10pt,twoside,english,reqno,a4paper]{amsart}
\usepackage{listings,graphicx,amsmath,varioref,amscd,amssymb,color,bm,stmaryrd}
\usepackage[colorlinks=false]{hyperref}
\usepackage{amsthm}
%

\oddsidemargin 2pt
\evensidemargin 2pt
\textwidth17cm
\textheight22cm
\flushbottom

\newlength{\defbaselineskip}
\setlength{\defbaselineskip}{\baselineskip}


\usepackage{amssymb}
\usepackage{amsmath}
\usepackage{amsthm}
\usepackage{amscd}
\usepackage{amssymb}
\usepackage{amsfonts}
\usepackage{graphics}

\usepackage{hyperref}
\usepackage{graphics}

\usepackage[english]{babel}


\theoremstyle{plain}
\newtheorem{theorem}{Theorem}[section]

\newtheorem{lemma}[theorem]{Lemma}

\theoremstyle{definition}
\newtheorem{defin}[theorem]{Definition}

\newtheorem{remark}[theorem]{Remark}

\theoremstyle{remark}

\setcounter{equation}{0}

\long\def\salta#1{\relax}

\def\rn{\mathbb{R}^{N}}

\def\be{\begin{equation}}
\def\ee{\end{equation}}
\def\rife#1{(\ref{#1})}

\def\vare{\varepsilon}

\def\into{\int_{\Omega}}

\def\w-1p'{W^{-1,p'}(\Omega)}

\def\w-1pd{W^{-1,p'}(D)}
\def\pw-1p'{L^{p'}(0,T;W^{-1,p'}(\Omega))}
\def\l{\textsl{L}}

\def\dys{\displaystyle}

\def\lp'n{(L^{p'}(\Omega))^{N}}

\def\huz{H^1_0 (\Omega)}


\def\luo{L^{1}(\Omega)}


\def\car#1{\raise2pt\hbox{$\chi$}_{#1}}


\definecolor{seagreen}{rgb}{0.18, 0.55, 0.34}




\def\lp'n{(L^{p'}(\Omega))^{N}}

\def\huz{H^1_0 (\Omega)}

\def\t1p0{T^{1,p}_{0}(\Omega)}
\def\w-1p'{W^{-1,p'}(\Omega)}
\def\pw-1p'{L^{p'}(0,T;W^{-1,p'}(\Omega))}

\def\lil2{L^{\infty}(0,T;L^2 (\Omega))}

\def\l2h10{L^2 (0,T ; H^1_0 ( \Omega ))}

\def\m2{M^{\frac{N(p-1)}{N-1}}(\Omega)}




\def\l{\textsl{L}}


\def\vare{\varepsilon}


\def\into{\int_{\Omega}}

\def\intq{\int_{Q}}

\def\intq1{\displaystyle \int_{\Omega \times (0, 1)}}

\def\bk{\color{black}}

\def\rife#1{(\ref{#1})}

\def\dys{\displaystyle}


\def\og{\leavevmode\raise.3ex\hbox{$\scriptscriptstyle\langle\!\langle$~}}
\def\fg{\leavevmode\raise.3ex\hbox{~$\!\scriptscriptstyle\,\rangle\!\rangle$}}




\author[F. Oliva]{Francescantonio Oliva}
\author[F. Petitta]{Francesco Petitta}

\address[F. Oliva]{Dipartimento di Scienze di Base e Applicate
per l' Ingegneria, ``Sapienza", Universit\`a di Roma, Via Scarpa 16, 00161 Roma, Italy. 
\\ \url{francesco.oliva@sbai.uniroma1.it}}
\address[F. Petitta]{Dipartimento di Scienze di Base e Applicate
per l' Ingegneria, ``Sapienza", Universit\`a di Roma, Via Scarpa 16, 00161 Roma, Italy. 
\\ \url{francesco.petitta@sbai.uniroma1.it}}

\keywords{Nonlinear elliptic equations, Singular elliptic equations, Measure data} \subjclass[2000]{35J60, 35J61, 35J75, 35R06}

\begin{document}

\title{On singular elliptic equations with measure sources}





\begin{abstract}

We prove existence of solutions for a class of singular elliptic problems with a general measure as source term whose model is
$$\begin{cases}
 \dys   -\Delta u  = \frac{f(x)}{u^{\gamma}} +\mu & \text{in}\ \Omega,\\
 u=0 &\text{on}\ \partial\Omega,\\
  u>0 &\text{on}\ \Omega,
  \end{cases}
$$
where $\Omega$ is an open bounded subset of $\rn$. Here $\gamma > 0$, $f$ is a nonnegative function on $\Omega$, and $\mu$ is a nonnegative bounded Radon measure on $\Omega$. 

\vskip 0.5\baselineskip

%


\end{abstract}

\maketitle




\section{Introduction}

In this paper we prove existence of a nonnegative weak solution to the following elliptic problem 
\begin{equation}
\begin{cases}
 \displaystyle -\operatorname{div}(A(x)\nabla u) = \frac{f(x)}{u^\gamma}+\mu &  \text{in}\, \Omega, \\
 u=0 & \text{on}\ \partial \Omega,
\label{pb}
\end{cases}
\end{equation}
where $\Omega$ is an open bounded subset of $\rn$, $N\ge 2$, $\gamma > 0$, $f$ is a nonnegative function which has some Lebesgue regularity and $\mu$ is a nonnegative bounded Radon measure.  Here $A:\Omega\to \mathbb{R}^{N^2}$ is a bounded elliptic matrix, and we ask both $f$ and $\mu$ to be not identically zero. We stress that the problem is singular as one ask to the solution to be zero on the boundary. 
\\ If $\mu\equiv 0 $, $A(x)\equiv I$, and $f$ is  smooth, then problem \rife{pb} has  been extensively studied in the past (see for instance \cite{crt,lm} and references therein).  Recently, the existence of a distributional solution for problem \rife{pb} has been considered in \cite{bo}  again in the homogeneous  case $\mu\equiv 0$ (see also \cite{boca,am} for further improvements). 

The nonhomogeneous case (i.e. $\mu\neq 0$) has also been considered: motivated by the study of $G$-convergence in periodic composite media, in \cite{gmm,gmm2} the authors prove existence of bounded  solutions  to problems \rife{pb} if  both $f$ and  $\mu$ belong to $ L^{q}(\Omega)$, with $q>\frac{N}{2}$,  while in \cite{cgs} symmetry properties for solutions  of a related semilinear problem are considered (see also \cite{cd,c,cc} for further related results).\bk

Let us also observe that problem \rife{pb} is also related, through the  standard Hopf-Lax type transformation $u=v^{1- \eta}$ (with $\eta=\frac{\gamma}{\gamma+1}$) to the singular quadratic problem (for simplicity, consider $A(x)=I$)  
\begin{equation}
\begin{cases}
 \displaystyle -\Delta v +\eta\frac{|\nabla v|^{2}}{v} = \tilde{g}(x)v^{\eta}+ {\tilde{f}(x)}\,&  \text{in}\ \Omega, \\
 v=0 & \text{on}\ \partial \Omega\,,
\label{pbq}
\end{cases}
\end{equation}
with $\tilde{g}=\frac{\mu}{1-\eta}$ and $\tilde{f}=\frac{f}{1-\eta}$. If $\tilde{g}\equiv 0 $ and $f$ is an $L^{1}$ function then problem \rife{pbq} has been recently considered in the literature (see for instance \cite{b, a6,  gps} and references therein).  
Finally, the case of $\tilde{g}$  being a constant and $\tilde{f} \in L^{\frac{2N}{N+2}}(\Omega)$ has been studied in \cite{acm} through variational methods.   In particular, Theorem \ref{main} below  can be viewed  as a generalization of  the existence result in \cite[Theorem 1.2]{acm}. 

\medskip

Our aim is to give a complete account on the solvability of boundary value problems as  \rife{pb} under minimal assumptions on the data.  
The plan of the paper is the following:    Section \ref{secgen} is devoted to  the proof of existence of weak solutions for problem \rife{pb}  in the general case through an approximation argument. In Section \ref{secregular} we  further investigate a special case of \rife{pb}  (i.e. $A(x)=I$ and $\gamma<1$) on smooth domains; in this case we prove existence and uniqueness of suitable solutions to \rife{pb}.  Finally, in Section \ref{secintrop} we show how to generalize some of the results we obtained to the case of a nonlinear principal part of Leray-Lions type with growth $p-1$ as the $p$-laplacian. \bk 

\medskip

\subsection*{Notations.}  If no otherwise specified, we will denote by $C$ several constants whose value may change from line to line and, sometimes, on the same line. These values will only depend on the data (for instance $C$ can depend on $\Omega$, $\gamma$, $N$, $\alpha$ and $\beta$) but they will never depend on the indexes of the sequences we will often introduce. For the sake of simplicity we will often use the simplified notation $$\into f:=\into f(x)\ dx\,,$$ when referring to integrals when no ambiguity on the  variable of integration   is possible.

For fixed $k>0$ we will made use of the truncation functions $T_{k}$ and $G_{k}$ defined as
$$
T_k(s)=\max (-k,\min (s,k)), 
$$
and
$$
G_k(s)=(|s|-k)^+ \operatorname{sign}(s). 
$$

We will also make use of the Marcinkievicz space $M^q(\Omega)$ (or weak $L^{q}(\Omega)$) which is defined, for every $0<q<\infty$, as the space of all measurable functions $f$ such that there exists $c>0$ with 
$$m(\{x:|f(x)|>t\}) \le \frac{c}{t^q}.$$
Of course, as $\Omega$ is bounded, then it suffices for the previous inequality to hold for $t\geq t_{0}$, for some fixed positive $t_{0}$. 
We only recall that the following continuous embeddings hold
\begin{equation} L^{q}(\Omega) \hookrightarrow M^q(\Omega) \hookrightarrow L^{q-\epsilon}(\Omega),\label{marcin}\end{equation}
for every $1<q<\infty$ and $0<\epsilon\leq q-1$.

\section{Main assumptions and existence of a solution}
\label{secgen}

Let us  consider the following boundary value problem
\begin{equation}
\begin{cases}
 \displaystyle -\operatorname{div}(A(x)\nabla u) = \frac{f(x)}{u^\gamma}+\mu &  \text{in}\, \Omega, \\
 u=0 & \text{on}\ \partial \Omega,
\label{pba}
\end{cases}
\end{equation}
where $\Omega$ is any open bounded subset of $\rn$, $N\ge 2$, $\gamma > 0$, $f$ is a nonnegative function in $L^{1}(\Omega)$, $\mu$ is a nonnegative bounded Radon measure, and   $A:\Omega\to \mathbb{R}^{N^2}$ is a bounded elliptic matrix, which satisfies  
$$
\alpha |\xi|^2 \le A(x)\xi \cdot \xi,\ \ \ |A(x)|\le \beta,
$$
for every $\xi$ in $\mathbb{R}^N$, for almost every $x$ in $\Omega$, for some  $0<\alpha\le\beta$. As we said we ask to both $f$ and $\mu$ to be not identically zero otherwise we fall back in the previous considered cases of, respectively, \cite{s} and \cite{bo}. 

In this kind of generality, uniqueness of weak solutions  is not expected in general even in smoother cases (we refer to \cite{bc} \bk for further comments on this fact).  In Section \ref{secregular} below we will prove a uniqueness result in a very special model case (i.e. $A(x)=I$ and $\gamma<1$).
 
Our aim is to prove the  existence of  suitable weak solutions to  \rife{pba}.  
Here is the notion of solution we will consider. 
\begin{defin}
If $\gamma\leq 1$, then a weak solution to problem \rife{pba} is a function $u\in W^{1,1}_0(\Omega)$  such that 
 \begin{equation} \displaystyle \forall \omega \subset\subset \Omega \ \exists c_{\omega} : u \ge c_{\omega}>0,\label{priweakdef<=1}\end{equation}
and such that
 \begin{equation} \displaystyle \int_{\Omega}A(x)\nabla u \cdot \nabla \varphi =\int_{\Omega} \frac{f\varphi}{u^\gamma} + \int_{\Omega} \varphi d\mu,\ \ \ \forall \varphi \in C^1_c(\Omega).\label{secweakdef<=1}\end{equation}
 If $\gamma>1$, then a weak solution to problem \rife{pba} is a function $u\in W^{1,1}_{loc}(\Omega)$  satisfying \rife{priweakdef<=1}, \rife{secweakdef<=1},  and such that $T_k^{\frac{\gamma+1}{2}}(u)\in H^1_0(\Omega)$ for every fixed $k>0$.
\label{weakdef<=1}
\end{defin}
\begin{remark} First of all observe that, due to \rife{priweakdef<=1}, all terms in \rife{secweakdef<=1} are well defined. 
Let us spend some words on how the boundary data is intended in the strongly singular case (i.e. $\gamma>1$). In fact, in this case  the Dirichlet datum is not expected to be achieved in the classical sense of traces. Even in the case $\mu=0$, this was already noticed  in \cite{lm}; in this paper the authors proved that, if $f$ is a bounded smooth function then no solutions in $H^{1}_{0}(\Omega)$ can be found if $\gamma\geq 3$. The threshold $3$ is essentially  due to the fact that the datum $f$ is assumed to be bounded. A sharper result was recently proven, through variational methods,  in \cite[Theorem 1]{sz} for a nonnegative $f\in\luo$ and $A(x)=I$. This result reads as: an $H^{1}_{0}(\Omega)$ solution does exist for problem
$$
\begin{cases}
 \displaystyle -\Delta u= \frac{f(x)}{u^\gamma}&  \text{in}\, \Omega, \\
 u=0 & \text{on}\ \partial \Omega,
\end{cases}
$$
 if and only if there exist a function  $u_{0}\in \huz$ such that 
 \begin{equation}\label{cin}
 \int_{\Omega}fu_{0}^{1-\gamma}\ dx<\infty\,.
 \end{equation}
 
 A straightforward computation shows that, at least in the radial case, for any $\gamma>1$ condition \rife{cin} is not satisfied for suitable $f\in\luo$.   
For that reason we need to  impose the weaker condition on the truncations of $u$ (i.e. $T(u)^{\frac{\gamma+1}{2}}\in\huz$) in order to give a (weak) sense to the boundary datum. This kind of weak boundary condition was already used in the literature (see for instance \cite{bo}). Here we need to use truncations of $u$ as the presence of  the measure data $\mu$ does not allow to conclude that $u^{\frac{\gamma+1}{2}}$ itself belongs to $\huz$.  Anyway let us observe that, if $\Omega$ is smooth enough and  $v^{\frac{\gamma+1}{2}}\in \huz$  ($v$ being  a nonnegative function), then
$$
\lim_{\vare\to 0^{+}}\frac{1}{\vare}\int_{\{x:{\rm dist}(x,\partial\Omega)<\vare\}} v^{\frac{\gamma+1}{2}} (x) \, dx=0
$$   
(see \cite{po}). As $\gamma>1$, using H\"older's inequality one can easily get that 
$$
\lim_{\vare\to 0^{+}}\frac{1}{\vare}\int_{\{x:{\rm dist}(x,\partial\Omega)<\vare\}} v (x) \, dx=0,
$$
which is a clearer way to understand the boundary condition we use here.

\bk
\end{remark}

We will prove existence of solutions for problem \rife{pba} by approximation. In order to do that we need some preliminary results on the approximating sequences of solutions that can be proven no matter of the value of $\gamma$.

Let us consider the following problem
\begin{equation}
\begin{cases}
 \displaystyle -\operatorname{div}(A(x)\nabla u_n) =\frac{f_n}{(u_n+ \frac{1}{n})^\gamma}+\mu_n &  \text{in}\, \Omega, \\
 u_n=0 & \text{on}\ \partial \Omega,
\label{eqapprox}
\end{cases}
\end{equation}
where $f_n$ is the truncation at level $n$ of $f$ and $\mu_n$ is a sequence of smooth nonnegative functions, bounded in $L^1(\Omega)$, converging weakly to $\mu$ in the sense of the measures. 
\\We want to prove the existence of a weak solution of problem \rife{eqapprox} for every fixed $n \in \mathbb{N}$.
\begin{lemma}
Problem \rife{eqapprox} admits a nonnegative weak solution $u_n \in H^1_0(\Omega)\cap L^{\infty}(\Omega)$. 
\label{soleqapprox}
\end{lemma}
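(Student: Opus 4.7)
The plan is to obtain $u_n$ as a fixed point of a suitable compact operator on $L^2(\Omega)$, exploiting the fact that for each fixed $n$ the singular term is harmless: since $f_n\le n$ and $(u_n+\tfrac{1}{n})^\gamma\ge n^{-\gamma}$, the nonlinearity is bounded by $n^{\gamma+1}$, and $\mu_n$, being smooth, belongs to $L^\infty(\Omega)$. So the equation is essentially a linear elliptic problem with a bounded, positive right-hand side that depends on the unknown only through a locally Lipschitz, bounded composition.

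Concretely, I would define a map $S:L^2(\Omega)\to L^2(\Omega)$ as follows: given $v\in L^2(\Omega)$, let $w=S(v)\in H^1_0(\Omega)$ be the unique weak solution, provided by Lax--Milgram applied to the bilinear form associated with $A$ (coercive thanks to $\alpha|\xi|^2\le A(x)\xi\cdot\xi$), of
\begin{equation*}
-\operatorname{div}(A(x)\nabla w)=\frac{f_n}{(|v|+\tfrac{1}{n})^\gamma}+\mu_n \quad \text{in }\Omega,\qquad w=0\text{ on }\partial\Omega.
\end{equation*}
The right-hand side lies in $L^\infty(\Omega)\subset H^{-1}(\Omega)$ with norm controlled by a constant $M_n$ depending only on $n$, $\gamma$, $\|\mu_n\|_\infty$ and $|\Omega|$; in particular $\|w\|_{H^1_0}\le C_n$ uniformly in $v$. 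Nonnegativity $w\ge 0$ follows by testing with $w^-\in H^1_0(\Omega)$ and using that the source is nonnegative together with ellipticity (hence $|v|$ and $w^+$ agree at a fixed point). The $L^\infty$ bound $\|w\|_\infty\le K_n$ is the standard Stampacchia estimate for a bounded source under the ellipticity assumption.

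Compactness of $S$ follows from the uniform $H^1_0$ bound combined with the Rellich--Kondrachov embedding $H^1_0(\Omega)\hookrightarrow\hookrightarrow L^2(\Omega)$, so $S$ sends $L^2(\Omega)$ into a relatively compact subset of $L^2(\Omega)$. For continuity, if $v_k\to v$ in $L^2(\Omega)$, then up to a subsequence $v_k\to v$ a.e., and the dominated convergence theorem, with domination by the constant $n^{\gamma+1}$, gives convergence of the right-hand sides in $L^2(\Omega)$; linear elliptic continuous dependence then yields $S(v_k)\to S(v)$ in $H^1_0(\Omega)$, hence in $L^2(\Omega)$. The whole sequence converges by a standard uniqueness-of-limit argument. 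Applying Schauder's fixed point theorem (in the closed ball of $L^2(\Omega)$ of radius $C_n|\Omega|^{1/2}$, which is invariant under $S$) produces a fixed point $u_n=S(u_n)\in H^1_0(\Omega)\cap L^\infty(\Omega)$, which by the nonnegativity argument satisfies $|u_n|=u_n$ and therefore solves \eqref{eqapprox}.

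The only mildly delicate step is the nonnegativity, which is the reason I use $|v|$ instead of $v$ in the definition of $S$: this guarantees that the source in the approximating equation is nonnegative for every $v\in L^2(\Omega)$, so that the maximum principle argument (test with $w^-$) gives $w\ge 0$, and hence any fixed point automatically solves \eqref{eqapprox} without absolute values. All remaining steps—the Lax--Milgram solvability, the $H^1_0$ and $L^\infty$ a priori bounds, the compactness via Rellich, and the dominated-convergence continuity—are routine once one has isolated the key observation that, for each fixed $n$, the singularity is cut off at level $n^\gamma$.
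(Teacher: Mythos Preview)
Your proof is correct and follows essentially the same route as the paper: Schauder's fixed point theorem on $L^2(\Omega)$ applied to the map $v\mapsto w$ solving the linear problem with source $\dfrac{f_n}{(|v|+\tfrac1n)^\gamma}+\mu_n$, using Lax--Milgram for solvability, Rellich--Kondrachov for compactness, dominated convergence for continuity, Stampacchia for the $L^\infty$ bound, and the maximum principle for nonnegativity. The only cosmetic difference is that you establish $w\ge 0$ for every $v$ before applying Schauder, whereas the paper checks nonnegativity of the fixed point afterwards; both are fine.
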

\begin{proof}
The proof is based on standard  Schauder's fixed point argument. 
Let $n\in\mathbb{N}$ be fixed, let us  define
$$G:L^2(\Omega) \to L^2(\Omega)\,,$$
as the map that, for any $v \in L^2(\Omega)$ gives  the weak solution  $w$ to the following problem
$$
\begin{cases}
 \displaystyle -\operatorname{div}(A(x)\nabla w) =\frac{f_n}{(|v|+ \frac{1}{n})^\gamma}+\mu_n &  \text{in}\, \Omega, \\
 w=0 & \text{on}\ \partial \Omega.
\end{cases}
$$
It follows from classical theory (e.g. by Lax-Milgram lemma) that $w \in H^1_0(\Omega)$ for every fixed $v \in L^2(\Omega)$. This implies that we can choose $w$ as test function in the weak formulation.
\\Thus
 $$ \displaystyle \alpha\int_{\Omega}|\nabla w|^2 \le\int_{\Omega}A(x)\nabla w \cdot \nabla w =\int_{\Omega} \frac{f_nw}{(|v|+\frac{1}{n})^\gamma} + \int_{\Omega} w \mu_n \le \left (n^{\gamma+1}+C(n)\right ) \int_{\Omega}|w|.$$
Therefore, using the Poincar\'e inequality on the left hand side and the H\"older inequality on the right side
 $$ \displaystyle\int_{\Omega}|w|^2  \le C'\left (n^{\gamma+1}+C(n)\right ) \left ( \int_{\Omega}|w|^2 \right )^{\frac{1}{2}}.$$
This means that
$$||w||_{L^2(\Omega)}\le C'\left (n^{\gamma+1}+C(n)\right ),$$
where $C',C(n)$ are independent from $v$. Thus, we have that the ball of radius $C'\left (n^{\gamma+1}+C(n)\right )$ is invariant for $G$. 
Now we prove that the map $G$ is continuous in $L^2(\Omega)$. Let us choose a sequence $v_k$ that converges to $v$ in $L^2(\Omega)$; then, by the dominated convergence theorem
$$\displaystyle \left (\frac{f_n}{(|v_k|+ \frac{1}{n})^\gamma}+\mu_n\right)_{k \in \mathbb{N}} \text{ converges to  } \displaystyle \left (\frac{f_n}{(|v|+ \frac{1}{n})^\gamma}+\mu_n\right) \text{  in }L^2(\Omega).$$ 
Thus, by the uniqueness of the weak solution for the linear problem, we can say that $w_k:=G(v_k)$ converges to $w:=G(v)$ in $L^2(\Omega)$. Therefore, we proved that $G$ is continuous. 

What finally need to check that the set $G(L^2(\Omega))$ is relatively compact in $L^2(\Omega)$. We proved before that
$$\into |\nabla G(v)|^{2}\le C(n,\gamma),$$
for any $v\in L^{2}(\Omega)$, so that, $G(v)$ is relatively compact in $L^{2}(\Omega)$ by Rellich-Kondrachov theorem. 

Now we can finally apply Schauder fixed point theorem to obtain  that $G$ has a fixed point $u_n \in L^2(\Omega)$ that is a  solution to \rife{eqapprox} in $H^1_0(\Omega)$. Moreover,  $u_n$ belongs to $L^\infty(\Omega)$ (see \cite{s}). Here $\left(\frac{f_n}{(|u_n|+ \frac{1}{n})^\gamma}+\mu_n\right)\ge 0$ then the maximum principle implies that $u_n\ge0$ and this concludes the proof.
\end{proof}

The next step consists in the proof that $u_n$ is uniformly bounded from below on the compact subsets of $\Omega$.
\begin{lemma}
The sequence $u_n$ is such that for every $\omega \subset\subset \Omega$ there exists $c_{\omega}$ (not depending on $n$) such that 
$$ \displaystyle u_n(x) \ge c_{\omega}>0,\text{   for a.e. x in $\omega$, and for every n in $\mathbb{N}$.}$$ 
\end{lemma}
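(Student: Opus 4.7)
The plan is to produce an $n$-independent positive subsolution $\underline{u}$ of problem \rife{eqapprox} and then invoke the strong maximum principle to get a uniform lower bound. More precisely, I would first construct a nonnegative $\underline{u} \in H^1_0(\Omega)\cap L^\infty(\Omega)$ solving
$$
-\operatorname{div}(A(x)\nabla \underline{u}) = \frac{f_1}{(\underline{u}+1)^\gamma} \quad \text{in } \Omega, \qquad \underline{u} = 0 \text{ on } \partial\Omega,
$$
where $f_1 = T_1(f)$. Existence follows from the same Schauder's fixed point scheme used in Lemma~\ref{soleqapprox} (the argument is actually easier here, since the right-hand side is bounded by $\|f_1\|_{L^\infty(\Omega)} \leq 1$ for any nonnegative argument, and no measure term is present).

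Next, I would compare $\underline{u}$ with $u_n$. The key observation is that for every $n \geq 1$ one has $f_n \geq f_1$, $\tfrac{1}{n} \leq 1$, and $\mu_n \geq 0$, so
$$
-\operatorname{div}(A(x)\nabla u_n) = \frac{f_n}{(u_n+\frac{1}{n})^\gamma} + \mu_n \;\geq\; \frac{f_1}{(u_n+1)^\gamma}
$$
in the weak sense. Subtracting the equation for $\underline{u}$ from this supersolution inequality and testing with $(\underline{u}-u_n)^+ \in H^1_0(\Omega)$, on $\{\underline{u}>u_n\}$ the right-hand side is nonpositive because $s \mapsto (s+1)^{-\gamma}$ is decreasing, while the left-hand side is bounded below by $\alpha\|\nabla(\underline{u}-u_n)^+\|_{L^2(\Omega)}^2$ thanks to ellipticity. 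Hence $(\underline{u}-u_n)^+ \equiv 0$, namely $u_n \geq \underline{u}$ a.e.\ in $\Omega$.

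Finally, I would deduce the pointwise lower bound from the strong maximum principle. Since $f \not\equiv 0$, $f_1$ is a nontrivial nonnegative bounded function, so $\underline{u}$ solves a linear elliptic equation with bounded measurable coefficients whose right-hand side $f_1/(\underline{u}+1)^\gamma$ belongs to $L^\infty(\Omega)$ and is not identically zero. By De Giorgi-Nash-Moser regularity $\underline{u}$ is continuous, and the strong maximum principle then yields $\underline{u}>0$ throughout $\Omega$; consequently, for every $\omega \subset\subset \Omega$ the number $c_\omega := \min_{\overline{\omega}} \underline{u}>0$ is independent of $n$ and gives $u_n \geq \underline{u} \geq c_\omega$ on $\omega$, as required.

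The main obstacle is that, in contrast to the measure-free setting of \cite{bo}, here one cannot directly prove that the sequence $\{u_n\}$ is monotone in $n$: the natural comparison of the equations at consecutive levels would require $\mu_n \leq \mu_{n+1}$, which generally fails for a weakly convergent approximation (such as a mollification) of a generic bounded Radon measure $\mu$. Introducing the frozen auxiliary problem for $\underline{u}$ circumvents this difficulty by producing a single subsolution that works simultaneously for every $n \geq 1$.
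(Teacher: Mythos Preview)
Your argument is correct and follows the same strategy as the paper: compare $u_n$ with a solution of the measure-free singular problem and inherit the uniform positive lower bound from there. The only difference is organizational. The paper compares $u_n$ with the approximant $v_n$ of problem \rife{eqboapprox} at the \emph{same} level $n$ (so that the two right-hand sides differ only by the nonnegative term $\mu_n$) and then invokes \cite{bo} for the $n$-independent bound $v_n\ge c_\omega$; you instead freeze the level and compare every $u_n$ with the single function $\underline{u}$ (which is precisely $v_1$ in the paper's notation), supplying its strict positivity yourself via De~Giorgi--Nash--Moser continuity and the strong maximum principle. Your route is a bit more self-contained, while the paper's is shorter by delegating that positivity step to \cite{bo}; your closing remark about the failure of monotonicity of $u_n$ in $n$ correctly identifies why one cannot simply mimic the \cite{bo} argument for $u_n$ directly.
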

\begin{proof}
Let consider the following problem
\begin{equation}
\begin{cases}
 \displaystyle -\operatorname{div}(A(x)\nabla v_n) =\frac{f_n}{(v_n+ \frac{1}{n})^\gamma} &  \text{in}\, \Omega, \\
 v_n=0 & \text{on}\ \partial \Omega.
\label{eqboapprox}
\end{cases}
\end{equation}
It was proved in \cite{bo} the existence of a weak solution $v_{n}$ of \rife{eqboapprox} such that
$$ \displaystyle \forall \omega \subset\subset \Omega \ \exists c_{\omega} : v_n \ge c_{\omega}>0,$$
for almost every $x$ in $\omega$ and where $c_{\omega}$ is independent of $n$. Thus, taking $(u_n-v_n)^-$ as test function in the difference between the formulations of, respectively,  \rife{eqapprox} and \rife{eqboapprox}, we obtain, using ellipticity
$$ 
\begin{array}{l}\displaystyle -\alpha\into|\nabla (u_n-v_n)^-|^{2}\geq \int_{\Omega}A(x) \nabla (u_n-v_n) \cdot  \nabla (u_n-v_n)^- \\ \\ \dys = \int_{\Omega} \left(\frac{f_n(x)}{(u_n+\frac{1}{n})^\gamma} \dys -\frac{f_n(x)}{(v_n+\frac{1}{n})^\gamma}\right)(u_n-v_n)^- + \int_{\Omega} \mu_n(u_n-v_n)^- \ge 0,\end{array}$$
that implies $u_{n }\geq v_{n}$ for a.e.  $x$ in $\omega$, and so  
$$
 \displaystyle \forall \omega \subset\subset \Omega \ \exists c_{\omega} : u_n\ge c_{\omega}>0, \ \ \text{for a.e.  $x$ in $\omega$}.
$$
\end{proof}

We can now prove existence of solutions for problem \rife{pba}. In order to do that we distinguish between two cases. 

\subsection{The case $\gamma\leq 1$}

As one could expect from classical theory of measure data problem we will have that $u_n$ is bounded in $W^{1,q}_0(\Omega)$ for every $q<\frac{N}{N-1}$. 

\begin{lemma}
 Let $u_n$ be the solution of \rife{eqapprox} with $\gamma\leq 1$. Then $u_n$ is bounded in $W^{1,q}_0(\Omega)$ for every $q<\frac{N}{N-1}$.
\label{bounded<1}
\end{lemma}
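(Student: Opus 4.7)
The plan is to adapt the classical Boccardo--Gallou\"et scheme for measure data problems to the present singular setting. The role of $\gamma\le 1$ will be exactly to keep the singular contribution in the energy estimate of order $k^{1-\gamma}$, hence harmless compared with the $k$--growth produced by the measure term.

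First, since $u_n\in\huz\cap\lio$ by Lemma~\ref{soleqapprox}, I would take $\tkun\in\huz\cap\lio$ as a test function in \rife{eqapprox}. Ellipticity of $A(x)$ gives
\[
\alpha\into|\nabla \tkun|^{2}\le \into \frac{f_n\,\tkun}{\big(u_n+\tfrac{1}{n}\big)^{\gamma}} + \into \tkun\,\mu_n.
\]
The last integral is trivially bounded by $k\|\mu_n\|_{\luno}\le Ck$. For the singular term I would split the domain into $\{u_n\le k\}$ and $\{u_n>k\}$: using $\gamma\le 1$ and $n\ge 1$,
\[
\frac{\tkun}{(u_n+\tfrac{1}{n})^{\gamma}}\le \big(u_n+\tfrac{1}{n}\big)^{1-\gamma}\le (k+1)^{1-\gamma}\quad\text{on }\{u_n\le k\},
\]
and $\frac{\tkun}{(u_n+1/n)^{\gamma}}\le k^{1-\gamma}$ on $\{u_n>k\}$. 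Hence the singular term is controlled by $C(k+1)^{1-\gamma}\|f\|_{\luno}$, and altogether
\[
\|\nabla \tkun\|_{L^{2}(\Omega)}^{2}\le C(k+1)\qquad\text{uniformly in }n.
\]

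Next I would apply the standard Boccardo--Gallou\"et level-set argument. Sobolev embedding yields $\|\tkun\|_{L^{2^{*}}(\Omega)}^{2^{*}}\le Ck^{2^{*}/2}$, whence $|\{u_n>k\}|\le Ck^{-N/(N-2)}$, so that $u_n$ is uniformly bounded in $M^{N/(N-2)}(\Omega)$. For any $\lambda,k>0$,
\[
|\{|\nabla u_n|>\lambda\}|\le \frac{1}{\lambda^{2}}\int_{\{u_n\le k\}}|\nabla u_n|^{2}+|\{u_n>k\}|\le \frac{Ck}{\lambda^{2}}+\frac{C}{k^{N/(N-2)}}.
\]
Optimizing with $k=\lambda^{(N-2)/(N-1)}$ yields $|\{|\nabla u_n|>\lambda\}|\le C\lambda^{-N/(N-1)}$, so that $|\nabla u_n|$ is uniformly bounded in $M^{N/(N-1)}(\Omega)$.

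Finally, the continuous embedding \rife{marcin} together with Poincar\'e's inequality gives that $\{u_n\}$ is bounded in $W^{1,q}_{0}(\Omega)$ for every $q<\frac{N}{N-1}$, which is the claim. The only nonstandard ingredient is the bound on the singular term; the sublinearity coming from $\gamma\le 1$ makes it absorbable by the $Ck$ contribution produced by the measure, and the remainder of the argument is entirely classical. The main obstacle would arise for $\gamma>1$, when the singular term is no longer sublinear in $k$; this is exactly the reason why the case $\gamma>1$ will have to be treated separately in the sequel.
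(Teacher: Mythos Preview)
Your proof is correct and follows the same Boccardo--Gallou\"et scheme as the paper: bound $\|\nabla T_k(u_n)\|_{L^2}^2$ linearly in $k$, deduce the Marcinkiewicz bound $M^{N/(N-2)}$ on $u_n$, then optimize over $k$ to place $|\nabla u_n|$ in $M^{N/(N-1)}$.

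The one genuine difference is the choice of test function in the energy estimate. The paper uses $\varphi=(T_k(u_n)+\epsilon)^{\gamma}-\epsilon^{\gamma}$ (with $0<\epsilon<\tfrac{1}{n}$), which yields directly $\frac{(T_k(u_n)+\epsilon)^{\gamma}-\epsilon^{\gamma}}{(u_n+1/n)^{\gamma}}\le 1$ without distinguishing $\gamma<1$ from $\gamma=1$; the price is the extra $\epsilon$--regularization and a subsequent limit. You instead take $\varphi=T_k(u_n)$ and exploit $\gamma\le 1$ via the pointwise inequality $\dfrac{T_k(u_n)}{(u_n+1/n)^{\gamma}}\le (u_n+\tfrac{1}{n})^{1-\gamma}\le (k+1)^{1-\gamma}$ on $\{u_n\le k\}$ (and $\le k^{1-\gamma}$ on $\{u_n>k\}$). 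Your route is shorter and avoids the $\epsilon$--limit, while the paper's test function has the advantage of being the same object that will reappear (with $\gamma>1$) in the strongly singular case, giving a more unified presentation. Either way one lands on $\int_\Omega|\nabla T_k(u_n)|^2\le C(k^{1-\gamma}+k)$, and the remainder of the argument is identical. One minor omission: like the paper, your Sobolev step tacitly assumes $N>2$; for $N=2$ the argument needs the obvious modification.
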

\begin{proof}
We follow the classical approach of \cite{b6}. We will prove that  $\nabla u_n$ is bounded in $M^{\frac{N}{N-1}}(\Omega)$. Without loss of generality we can suppose $N>2$. The case $N=2$ is easier and can be treated in a standard way with many simplifications. 

We have 
$$
\begin{array}{l}\displaystyle \{|\nabla u_n| \ge t\} = \{|\nabla u_n| \ge t, u_n < k\} \cup \{|\nabla u_n| \ge t, u_n \ge k\}
\\\\ \subset \{|\nabla u_n| \ge t, u_n < k\} \cup \{u_n \ge k\}\,,\end{array}$$
thus,
$$\displaystyle m(\{|\nabla u_n| \ge t\}) \le m(\{|\nabla u_n| \ge t, u_n < k\})+ m(\{u_n \ge k\}).$$
\\ \\We take $\varphi = (T_k(u_n) +\epsilon)^{\gamma}-\epsilon^{\gamma}$ as test function in the weak formulation of \rife{eqapprox} where $\epsilon>0$. We stress that the $\epsilon$ perturbation of $T_k(u_n)^{\gamma}$ is only needed in the case $\gamma<1$ in order to be allowed to use it as test function.  
We obtain,
\begin{equation}
 \displaystyle \alpha \int_{\Omega}|\nabla T_k(u_n)|^2 (T_k(u_n) +\epsilon)^{\gamma-1} \le \int_{\Omega} \frac{f_n[(T_k(u_n) +\epsilon)^{\gamma}-\epsilon^{\gamma}]}{(u_n+\frac{1}{n})^{\gamma}} + \int_{\Omega} [ (T_k(u_n) +\epsilon)^{\gamma}-\epsilon^{\gamma}] \mu_n.
\label{stima1<1}
\end{equation}
For fixed $n$, if $\epsilon$ is taken smaller than $\displaystyle \frac{1}{n}$, then we can say that $\displaystyle \frac{[(T_k(u_n) +\epsilon)^{\gamma}-\epsilon^{\gamma}]}{(u_n+\frac{1}{n})^\gamma}\le 1$. Thus,  for the right hand side of \rife{stima1<1} we have both
 $$ \displaystyle \int_{\Omega} \frac{f_n[(T_k(u_n) +\epsilon)^{\gamma}-\epsilon^{\gamma}]}{(u_n+\frac{1}{n})^{\gamma}} \le  \int_{\Omega} f_n \le ||f||_{L^1(\Omega)}\,,$$
and
 $$ \displaystyle \int_{\Omega} [(T_k(u_n) +\epsilon)^{\gamma}-\epsilon^{\gamma}] \mu_n \le  (k+\epsilon)^\gamma||\mu_n||_{L^1(\Omega)}.$$
Combining the previous results we obtain
 $$ \displaystyle \int_{\Omega}|\nabla T_k(u_n)|^2 (T_k(u_n) +\epsilon)^{\gamma-1} \le C[1+(k+\epsilon)^\gamma].$$
Therefore, 
 $$\begin{array}{l} \displaystyle \int_{\Omega}|\nabla T_k(u_n)|^2 = \int_{\Omega}\frac{|\nabla T_k(u_n)|^2}{(T_k(u_n) +\epsilon)^{\gamma-1}} (T_k(u_n) +\epsilon)^{\gamma-1}\\\\  \displaystyle \le (k+\epsilon)^{1-\gamma}||\nabla T_k(u_n)|^2 (T_k(u_n) +\epsilon)^{\gamma-1}||_{L^1(\Omega)}
\leq C(k+\epsilon)^{1-\gamma}[1+(k+\epsilon)^{\gamma}],\end{array}$$
and passing to the limit in $\epsilon$
 \begin{equation}\label{stimale} \displaystyle \int_{\Omega}|\nabla T_k(u_n)|^2 \le C(k^{1-\gamma}+k).\end{equation}
Let us observe that the constant $C$ does not depend on the index $n$ of the sequence.
Thus, it follows from the Sobolev inequality that 
 $$ \displaystyle\frac{1}{\mathcal{S}^2} \left (\int_{\Omega}|T_k(u_n)|^{2^*}\right)^\frac{2}{2^*}\le \int_{\Omega}|\nabla T_k(u_n)|^2 \le C(k^{1-\gamma}+k).$$
If we restrict the integral on the left hand side on $\{u_n \ge k\}$ (on which $T_k(u_n)=k$) 
we then obtain
$$\displaystyle k^2 m(\{u_n \ge k\})^\frac{2}{2^*} \le C(k^{1-\gamma}+k),$$
so that
$$\displaystyle m(\{u_n \ge k\}) \le C\left( \frac{ (k^{1-\gamma}+k)}{k^2}\right ) ^{ \frac{N}{N-2}} \le\frac{C}{k^{\frac{N}{N-2}}}\ \ \ \ \forall k\ge 1,$$ 
that is $u_n$ is bounded in $M^{\frac{N}{N-2}}(\Omega).$
\\ \\
Now we estimate $m(\{|\nabla u_n| \ge t, u_n < k\})$. From \rife{stimale} we have
$$\displaystyle m(\{|\nabla u_n| \ge t, u_n < k\}) \le \frac{1}{t^2}\int_{\Omega}|\nabla T_k(u_n)|^2 \le \frac{C(k^{1-\gamma}+k)}{t^2}\le\frac{Ck}{t^2}\ \ \ \ \forall k\ge 1.$$
Combining previous inequalities we obtain
$$\displaystyle m\left(\{|\nabla u_n| \ge t\}\right) \le m\left(\{|\nabla u_n| \ge t, u_n < k\}\right) + m\left(\{u_n \ge k\}\right) \le  \frac{Ck}{t^2}+ \frac{C}{k^{\frac{N}{N-2}}}\ \ \ \ \forall k\ge 1.$$
We then choose $\displaystyle k=t^{\frac{N-2}{N-1}}$
$$\displaystyle m\left(\{|\nabla u_n| \ge t\}\right) \le  \frac{Ct^{\frac{N-2}{N-1}}}{t^2}+ \frac{C}{t^{\frac{N}{N-1}}}\ \ \ \ \forall t\ge 1.$$
Thus we get for a constant $C$ not depending on $n$
$$\displaystyle m\left(\{|\nabla u_n| \ge t\}\right) \le \frac{C}{t^{\frac{N}{N-1}}}\ \ \ \ \forall t\ge 1.$$
We have just proved that $\nabla u_n$ is bounded in $M^{\frac{N}{N-1}}(\Omega)$. This implies by property \rife{marcin} that $u_n$ is bounded in $W^{1,q}_0(\Omega)$ for every $q<\frac{N}{N-1}$.
\end{proof}

We now can prove our existence result for $\gamma\leq 1$. 

\begin{theorem}
Let $\gamma\leq 1$. Then there exists a weak solution $u$ of \rife{pba} in $W^{1,q}_0(\Omega)$ for every $q<\frac{N}{N-1}.$
\label{esistenzagamma<1}
\end{theorem}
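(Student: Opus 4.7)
The plan is to pass to the limit in the approximating problems \rife{eqapprox} exploiting the ingredients already at our disposal: the uniform $W^{1,q}_0$ bound of Lemma \ref{bounded<1}, the uniform local lower bound on $u_n$, and the compact support of the test functions in Definition \ref{weakdef<=1}.

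First I would invoke Lemma \ref{bounded<1}: the sequence $u_n$ is bounded in $W^{1,q}_0(\Omega)$ for every $q<\frac{N}{N-1}$. A diagonal extraction yields a subsequence (still denoted $u_n$) and a function $u\in W^{1,q}_0(\Omega)$ such that $u_n\rightharpoonup u$ weakly in $W^{1,q}_0(\Omega)$ for every $q<\frac{N}{N-1}$, $u_n\to u$ strongly in $L^{q}(\Omega)$, and $u_n\to u$ almost everywhere in $\Omega$. Passing the previous uniform lower bound to the pointwise limit gives $u(x)\ge c_\omega>0$ a.e.\ on every $\omega\subset\subset\Omega$, which is \rife{priweakdef<=1}; in particular $u>0$ a.e.\ in $\Omega$.

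Next I would fix $\varphi\in C^1_c(\Omega)$ and let $K=\supp\varphi\subset\subset\Omega$, and pass to the limit term by term in the weak formulation of \rife{eqapprox}. For the diffusion term, $A^{T}(x)\nabla\varphi\in L^\infty(\Omega)^N\subset L^{q'}(\Omega)^N$, so the weak convergence of $\nabla u_n$ in $L^{q}(\Omega)^N$ (for some $q>1$) gives
$$
\int_\Omega A(x)\nabla u_n\cdot\nabla\varphi\ \longrightarrow\ \int_\Omega A(x)\nabla u\cdot\nabla\varphi.
$$
For the singular term, the crucial point is that on $K$ we have $u_n\ge c_K$ uniformly in $n$, so
$$
\left|\frac{f_n\varphi}{(u_n+\tfrac{1}{n})^\gamma}\right|\ \le\ \frac{f\,\|\varphi\|_\infty}{c_K^{\,\gamma}}\,\chi_K\ \in L^1(\Omega),
$$
and dominated convergence, combined with $f_n\to f$ a.e.\ and $u_n\to u$ a.e., yields convergence to $\int_\Omega f\varphi/u^\gamma$. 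Finally, since $\mu_n\rightharpoonup\mu$ in the sense of measures and $\varphi\in C_c(\Omega)$, we get $\int_\Omega\varphi\,\mu_n\to\int_\Omega\varphi\,d\mu$. Combining these three limits produces \rife{secweakdef<=1}, completing the proof.

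There is no real obstacle in this subcritical regime: the singular lower bound has already been secured uniformly in $n$ by the preceding lemma, and since the test functions are compactly supported one never has to handle the singularity near $\partial\Omega$; moreover the principal part is linear so weak convergence of gradients is sufficient and no almost everywhere convergence of $\nabla u_n$ is needed. The only point requiring mild care is the fact that the constant in the a priori estimate \rife{stimale} gives control of $\nabla u_n$ only in the weak Marcinkiewicz space $M^{\frac{N}{N-1}}(\Omega)$, which is why we have to content ourselves with $u\in W^{1,q}_0(\Omega)$ for every $q<\frac{N}{N-1}$ rather than $q=\frac{N}{N-1}$.
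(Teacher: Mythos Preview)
Your proposal is correct and follows essentially the same route as the paper's proof: extract a weakly convergent subsequence from the $W^{1,q}_0$ bound of Lemma \ref{bounded<1}, pass to the limit in the linear principal part by weak convergence, and use dominated convergence on the singular term thanks to the uniform local lower bound on $u_n$ and the compact support of $\varphi$. You are in fact slightly more explicit than the paper in verifying \rife{priweakdef<=1} for the limit $u$ and in writing the correct power $c_K^{\,\gamma}$ in the dominating function.
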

\begin{proof}
It follows from Lemma \ref{bounded<1} that there exists $u$ such that (up to not relabeled subsequences) the sequence $u_n$ converges  weakly to $u$ in $W^{1,q}_0(\Omega)$ for every in $q<\frac{N}{N-1}$. 
This implies that for $\varphi$ in $C^1_c(\Omega)$
$$\displaystyle \lim_{n\to +\infty} \int_{\Omega}A(x)\nabla u_n \cdot \nabla \varphi =\int_{\Omega}A(x)\nabla u \cdot \nabla \varphi.$$
Moreover, by compact embeddings, we can also assume that $u_n$ converges to $u$ both strongly  in $L^{1}(\Omega)$ and a.e.
Thus, taking $\varphi$ in $C^1_c(\Omega)$, we have that 
$$\displaystyle 0\le \left|\frac{f_n\varphi}{(u_n+\frac{1}{n})^\gamma} \right | \le  \frac{||\varphi||_{L^{\infty}(\Omega)}}{c_{\omega}}f,$$
where $\omega$ is the set $\{\varphi\not= 0\}.$ This is enough to apply the dominated convergence theorem so that
$$\displaystyle \lim_{n\to +\infty} \int_{\Omega}\frac{f_n\varphi}{(u_n+\frac{1}{n})^\gamma} = \int_{\Omega}\frac{f\varphi}{u^\gamma}.$$
This concludes the  proof of the result as it is straightforward to pass to the limit in the last term involving $\mu_{n}$.
\end{proof}

\subsection{The strongly singular case: $\gamma > 1$}

In this case, as we already mentioned, we can only hold some {\it local} estimates on $u_n$ in the Sobolev space. In order to give sense to the function $u$ on the boundary of $\Omega$,  at least a weak sense, we shall provide {\it global} estimates on $T_k^{\frac{\gamma+1}{2}}(u_n)$ in $\huz$. 

As in the previous case we consider the solutions of the approximating solutions of  \rife{eqapprox}. Here is our global estimate on the power of the truncation of $u_{n}$. 
\begin{lemma}
Let $u_n$ be the solution of \rife{eqapprox} with $\gamma>1$. Then $T_k^{\frac{\gamma+1}{2}}(u_n)$ is bounded in $H^1_0(\Omega)$ for every fixed $k>0$.
\label{T_kbounded}
\end{lemma}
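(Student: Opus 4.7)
The plan is to test the weak formulation of \eqref{eqapprox} against a carefully chosen power of the truncation of $u_n$, namely $\varphi = T_k(u_n)^{\gamma}$. Since $u_n \in H^1_0(\Omega)\cap L^\infty(\Omega)$ (Lemma \ref{soleqapprox}) and $\gamma>1$ makes $s\mapsto s^{\gamma}$ of class $C^1$ on $[0,\infty)$ with bounded derivative on $[0,k]$, this function sits in $H^1_0(\Omega)$ and is therefore an admissible test function. No $\varepsilon$-regularization is needed here, in contrast with the previous lemma.

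On the left-hand side only the set $\{u_n\le k\}$ contributes, and using the ellipticity of $A(x)$ together with the pointwise identity
$$|\nabla T_k(u_n)^{\frac{\gamma+1}{2}}|^{2}=\left(\frac{\gamma+1}{2}\right)^{2}T_k(u_n)^{\gamma-1}|\nabla T_k(u_n)|^{2}$$
one obtains
$$\int_\Omega A(x)\nabla u_n\cdot\nabla T_k(u_n)^{\gamma}\ \geq\ \frac{4\alpha\gamma}{(\gamma+1)^{2}}\int_\Omega|\nabla T_k(u_n)^{\frac{\gamma+1}{2}}|^{2}.$$

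For the right-hand side, the key observation that handles the singular term is the trivial but crucial inequality $T_k(u_n)\le u_n+\tfrac{1}{n}$, which gives
$$\int_\Omega\frac{f_n\,T_k(u_n)^{\gamma}}{(u_n+\tfrac{1}{n})^{\gamma}}\ \leq\ \int_\Omega f_n\ \leq\ \|f\|_{L^{1}(\Omega)},$$
while $T_k(u_n)\le k$ together with the uniform $L^1$-bound on $\mu_n$ yields
$$\int_\Omega T_k(u_n)^{\gamma}\mu_n\ \leq\ k^{\gamma}\sup_n\|\mu_n\|_{L^{1}(\Omega)}\ \leq\ C\,k^{\gamma}.$$
Combining these three estimates produces
$$\int_\Omega|\nabla T_k(u_n)^{\frac{\gamma+1}{2}}|^{2}\ \leq\ C\bigl(\|f\|_{L^{1}(\Omega)}+k^{\gamma}\bigr),$$
with $C$ independent of $n$, which is the desired uniform bound; the Poincar\'e inequality then gives the $H^1_0$ bound.

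There is no real obstacle here: the only tiny technical point to keep in mind is verifying that $T_k(u_n)^{\gamma}\in H^{1}_{0}(\Omega)$ (which is why we need $\gamma>1$ rather than $\gamma=1$ exactly, and why the earlier lemma required the $\varepsilon$-trick in the range $\gamma<1$). The essential mechanism is that the singular nonlinearity is self-compensating: the factor $T_k(u_n)^{\gamma}$ in the numerator cancels against $(u_n+\tfrac{1}{n})^{\gamma}$ in the denominator exactly because we chose the exponent $\gamma$, making the dangerous term harmlessly bounded by $\|f\|_{L^1(\Omega)}$.
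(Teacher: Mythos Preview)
Your proof is correct and follows essentially the same approach as the paper: test with $T_k(u_n)^{\gamma}$, use ellipticity and the chain rule on the left, and bound the right-hand side via $T_k(u_n)^{\gamma}\le (u_n+\tfrac1n)^{\gamma}$ and $T_k(u_n)^{\gamma}\le k^{\gamma}$. Your tracking of the constant $\dfrac{4\alpha\gamma}{(\gamma+1)^{2}}$ is actually more accurate than the paper's; otherwise the arguments coincide.
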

\begin{proof} 
We take $\varphi = T_k^\gamma(u_n)$ as a test function in \rife{eqapprox} and we have
\begin{equation}
\displaystyle \gamma\int_{\Omega}A(x)\nabla u_n \cdot \nabla T_k(u_n) T_k^{\gamma-1}(u_n) =   \int_{\Omega} \frac{f_nT_k^\gamma(u_n)}{(u_n+\frac{1}{n})^{\gamma}} + \int_{\Omega} T_k^\gamma(u_n)\mu_n.
\label{stimalemma1>1}
\end{equation}
Using ellipticity we can estimate  the term on the left hand side of \rife{stimalemma1>1} as 
$$ \displaystyle \gamma\int_{\Omega}A(x)\nabla u_n \cdot \nabla T_k(u_n) T_k^{\gamma-1}(u_n) \ge \displaystyle \alpha\gamma\int_{\Omega}|\nabla T_k^{\frac{\gamma+1}{2}}(u_n)|^2.$$
Recalling that $\frac{T_k^\gamma(u_n)}{(u_n+\frac{1}{n})^{\gamma}} \le  \frac{u_n^\gamma}{(u_n+\frac{1}{n})^{\gamma}}\le 1,$ then for the term on the right hand side of \rife{stimalemma1>1} we can write
 $$ \displaystyle \int_{\Omega} \frac{f_nT_k^\gamma(u_n)}{(u_n+\frac{1}{n})^{\gamma}} + \int_{\Omega} T_k^\gamma(u_n)\mu_n \le 1+ k^\gamma \le C,$$
so that combining the previous inequalities we get
$$ \displaystyle \int_{\Omega}|\nabla T_k^{\frac{\gamma+1}{2}}(u_n)|^2 \le C,$$
that is what we had to prove. 
\end{proof}
Now, in order to pass to the limit in the weak formulation, we only need to prove some local estimates on $u_{n}$.  We prove the following
\begin{lemma}
Let $u_n$ be the solution of \rife{eqapprox} with $\gamma>1$. Then $u_n$ is bounded in $W^{1,q}_{loc}(\Omega)$ for every $q<\frac{N}{N-1}$.
\label{stima1>1}
\end{lemma}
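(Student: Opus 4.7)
The plan is to adapt the Boccardo--Gallou\"et argument of Lemma~\ref{bounded<1}, but ``lifted'' above the singular zone. Fix $\omega \subset\subset \Omega$ and let $c := c_\omega > 0$ be the uniform lower bound for $u_n$ on $\omega$ provided by the previous lemma. Setting $h := c/2$, one has $u_n > h$ on $\omega$ for every $n$, while on the (possibly larger) set $\{u_n > h\}$ the singular source is dominated by $f/h^\gamma$.

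The core step I aim for is the Boccardo--Gallou\"et--type estimate
\[
\int_{\{h < u_n < h+k\}} |\nabla u_n|^2 \leq Ck \qquad \forall k>0,
\]
uniformly in $n$, obtained by testing \rife{eqapprox} against $\varphi = T_k(G_h(u_n)) = T_k((u_n - h)^+) \in H^1_0(\Omega)$; admissibility follows from $u_n \in H^1_0(\Omega)$ (Lemma~\ref{soleqapprox}), the positive part killing the boundary trace. Ellipticity handles the left-hand side, while the bounds $T_k(G_h(u_n)) \leq k$ and $T_k(G_h(u_n))/(u_n + \tfrac{1}{n})^\gamma \leq k/h^\gamma$ control the right-hand side by $Ck$, with $C$ depending only on $h$, $\|f\|_{L^1(\Omega)}$, and $\sup_n \|\mu_n\|_{L^1(\Omega)}$.

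From here the argument mirrors the proof of Lemma~\ref{bounded<1} almost verbatim. Sobolev applied to $T_k(G_h(u_n)) \in H^1_0(\Omega)$ gives the level-set estimate $\mis\{u_n > h+k\} \leq C\,k^{-N/(N-2)}$ (with the case $N = 2$ handled by the standard modifications). Since $u_n > h$ on $\omega$, splitting $\{|\nabla u_n| > t\} \cap \omega$ at the threshold $u_n = h + k$ yields
\[
\mis\bigl(\{|\nabla u_n| > t\}\cap\omega\bigr) \leq \frac{Ck}{t^2} + \frac{C}{k^{N/(N-2)}},
\]
and the classical choice $k = t^{(N-2)/(N-1)}$ produces the Marcinkiewicz bound $\mis(\{|\nabla u_n| > t\}\cap\omega) \leq C t^{-N/(N-1)}$. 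Combined with the uniform $L^1(\Omega)$ bound on $u_n$ descending from Lemma~\ref{T_kbounded} via Sobolev embedding and \rife{marcin}, this gives $u_n$ bounded in $W^{1,q}(\omega)$ for every $q < N/(N-1)$, which is the claim.

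The only essentially new feature compared to the case $\gamma \leq 1$ is the upward shift by $h$, and this is where I expect the main conceptual subtlety to lie. Since Lemma~\ref{T_kbounded} only provides a global $H^1_0$ bound on the power $T_k^{(\gamma+1)/2}(u_n)$ and not on $T_k(u_n)$, a direct test with $T_k(u_n)$ would give a right-hand side of order $k^\gamma$ instead of $k$, destroying the Boccardo--Gallou\"et scaling and yielding an exponent strictly smaller than $N/(N-1)$; the shift $G_h$ cures this by relocating the test function into the region where $u_n$ is uniformly bounded away from zero, and it is precisely here that the pointwise lower bound of the previous lemma is unavoidable.
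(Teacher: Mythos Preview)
Your argument is correct and takes a somewhat different route from the paper's. The paper fixes the threshold at $1$ (independent of $\omega$) and splits $u_n = T_1(u_n) + G_1(u_n)$. For $G_1(u_n)$ it runs Boccardo--Gallou\"et \emph{globally}: the test function $T_k(G_1(u_n))$ yields $\int_\Omega |\nabla T_k(G_1(u_n))|^2 \leq Ck$, while the level-set estimate $m(\{u_n > k\}) \leq Ck^{-N/(N-2)}$ is imported from Lemma~\ref{T_kbounded} (via Sobolev applied to $T_k^{(\gamma+1)/2}(u_n)$, tracking the $k$-dependence there). For $T_1(u_n)$ it tests separately with $T_1^\gamma(u_n)$: the weight $T_1(u_n)^{\gamma-1}$ appearing on the left is bounded below by $c_\omega^{\gamma-1}$ on $\omega$, producing a local $H^1$ bound directly. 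Your one-shot argument with the $\omega$-dependent shift $h = c_\omega/2$ is more streamlined for the stated conclusion and, notably, does not need Lemma~\ref{T_kbounded} for the level-set estimate---you recover it from Sobolev on $T_k(G_h(u_n))$ itself (so your appeal to Lemma~\ref{T_kbounded} for the $L^1$ bound on $u_n$ is in fact redundant). In exchange, the paper's decomposition yields the slightly finer intermediate information that $G_1(u_n)$ is bounded in $W^{1,q}_0(\Omega)$ \emph{globally} and that $T_1(u_n)$ lies in $H^1_{loc}(\Omega)$ rather than merely $W^{1,q}_{loc}(\Omega)$.
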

\begin{proof}

We divide the proof in two steps.

{\bf Step $1$.} $G_1(u_n)$ is bounded in $W^{1,q}_{0}(\Omega)$ for every $q<\frac{N}{N-1}$.

We need to prove that
\begin{equation}\label{claim1} \displaystyle \int_{\{u_n>1\}}|\nabla u_n|^q \le C,\end{equation}
where $q<\frac{N}{N-1}$.
Analogously to the case $\gamma <1$, we have to prove that $\nabla G_1(u_n)$ is bounded in $M^{\frac{N}{N-1}}(\Omega)$. We have
\begin{equation}
m(\{|\nabla u_n|>t, u_n>1\}) \le m(\{|\nabla u_n|>t, 1<u_n\le k\}) +  m(\{u_n > k\}).
\label{cavalieri>1}
\end{equation}
In order to estimate \rife{cavalieri>1} we take $\varphi = T_k(G_1(u_n)) \ (k>1)$ as a test function in \rife{eqapprox}.

Recalling that $\nabla T_k(G_1(u_n))=\nabla u_n$ only when  $1<u_{n}\leq k$ (otherwise is zero), and that  $T_k(G_1(u_n))=0$ on $\{u_{n}\leq 1\}$, we have 
$$ \displaystyle \alpha \int_{\Omega}|\nabla T_k(G_1(u_n))|^2 \le   \int_{\Omega} \frac{f_nT_k(G_1(u_n))}{(1+\frac{1}{n})^{\gamma}} + \int_{\Omega} T_k(G_1(u_n)) \mu_n \le Ck\,,$$
and 
$$\begin{array}{l} \displaystyle \int_{\Omega}|\nabla T_k(G_1(u_n))|^2 = \int_{\{1<u_n\le k\}}|\nabla u_n|^2 \\\\ \dys \ge \int_{\{|\nabla u_n|>t, 1<u_n\le k\}}|\nabla u_n|^2\ge t^{2} m(\{|\nabla u_n|>t, 1<u_n\le k\}),\end{array}$$
so that, 
$$m(\{|\nabla u_n|>t, 1<u_n\le k\})\le \frac{Ck}{t^2} \ \ \ \forall k \ge 1\,.$$
By keeping track of the dependence on $k$ in the proof of Lemma \ref{T_kbounded} one readily realize that
$$ \displaystyle \int_{\Omega}|\nabla T_k^{\frac{\gamma+1}{2}}(u_n)|^2 \le Ck^{\gamma},\ \text{for any}\ k>1,$$
which, reasoning as in the proof of Lemma \ref{bounded<1}, gives 
$$m(\{u_n > k\})\le \frac{C}{k^\frac{N}{N-2}} \ \ \ \forall k \ge 1.$$
In order to conclude we take again  $k=t^{\frac{N-2}{N-1}}$ and we get 
$$m(\{|\nabla u_n|>t, u_n>1\})\le \frac{C}{t^\frac{N}{N-1}} \ \ \ \forall t \ge 1,$$
and this proves \rife{claim1}.

{\bf Step $2$.} $T_1(u_n)$ is bounded in $H^1_{loc}(\Omega)$.

We need  to investigate the behavior of  $u_n$ for small values (namely $u_{n}\leq 1$).  We want to prove that for every $\omega \subset\subset \Omega$ 
\begin{equation}\label{claim2} \displaystyle \int_{\omega}|\nabla T_1(u_n)|^2 \le C.\end{equation}
We have already proved that $u_n \ge c_{\omega}$ on $\omega$.
We take $T_1^{\gamma}(u_n)$  as a test function in \rife{eqapprox} in order to obtain 
\begin{equation}
\displaystyle \int_{\Omega}A(x) \nabla u_n \cdot  \nabla T_1(u_n)T_1^{\gamma-1}(u_n) =\into \frac{f_n T_1^{\gamma}(u_n)}{(u_n+ \frac{1}{n})^{\gamma}} + \int_{\Omega} \mu_n T_1^{\gamma}(u_n)\le C\,.
\label{equazionelocale>1}
\end{equation}
Now observe that 
\begin{equation}
\displaystyle \int_{\Omega}A(x) \nabla u_n \cdot  \nabla T_1(u_n)T_1^{\gamma-1}(u_n) \ge \alpha \int_{\Omega}|\nabla T_1(u_n)|^2 T_1^{\gamma-1}(u_n) \ge \alpha c_{\omega}^{\gamma-1} \int_{\omega}|\nabla T_1(u_n)|^2.
\label{equazionelocale2>1}
\end{equation}
Combining \rife{equazionelocale>1} and \rife{equazionelocale2>1} we  get  \rife{claim2}.

The proof of Lemma \ref{stima1>1} is complete as $u_n = T_1(u_n) + G_1(u_n),$ then  $u_n$ is bounded in $W^{1,q}_{loc}(\Omega)$ for every $q<\frac{N}{N-1}$.  
\end{proof}
We can finally  state and prove the following existence result.
\begin{theorem}\label{lab}
Let $\gamma>1$. Then there exists a weak solution $u$ of $\rife{pba}$ in $W^{1,q}_{loc}(\Omega)$ for every $q<\frac{N}{N-1}$.
\end{theorem}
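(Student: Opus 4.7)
The plan is to pass to the limit in the approximating problem \rife{eqapprox} using the compactness granted by Lemmas \ref{T_kbounded} and \ref{stima1>1}. First I would extract a (not relabeled) subsequence of $u_n$ such that $u_n\rightharpoonup u$ weakly in $W^{1,q}_{\mathrm{loc}}(\Omega)$ for every $q<N/(N-1)$ and, by the Rellich-Kondrachov theorem applied on any $\omega\subset\subset\Omega$ combined with a standard diagonal argument, $u_n\to u$ strongly in $L^1_{\mathrm{loc}}(\Omega)$ and almost everywhere in $\Omega$.

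The uniform local lower bound of the previous lemma immediately transfers: for every $\omega\subset\subset\Omega$ there exists $c_\omega>0$, independent of $n$, with $u_n\geq c_\omega$ on $\omega$, hence $u\geq c_\omega$ a.e. in $\omega$, so that \rife{priweakdef<=1} holds. To encode the weak boundary condition, I would invoke Lemma \ref{T_kbounded}: for each fixed $k>0$ the sequence $T_k^{(\gamma+1)/2}(u_n)$ is bounded in $H^1_0(\Omega)$, and the pointwise convergence $u_n\to u$ together with the continuity of $s\mapsto T_k^{(\gamma+1)/2}(s)$ identifies the weak limit as $T_k^{(\gamma+1)/2}(u)$. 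Since $H^1_0(\Omega)$ is weakly closed, $T_k^{(\gamma+1)/2}(u)\in H^1_0(\Omega)$ as required by Definition \ref{weakdef<=1}.

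To conclude, I would pass to the limit in the weak formulation of \rife{eqapprox} tested against an arbitrary $\varphi\in C^1_c(\Omega)$. Let $\omega\subset\subset\Omega$ be an open set containing $\mathrm{supp}(\varphi)$. For the diffusion term, the weak convergence $\nabla u_n\rightharpoonup \nabla u$ in $L^q(\omega)$ (any $q<N/(N-1)$) against $A^{\top}\nabla\varphi\in L^{\infty}(\omega)^N$ yields
$$\lim_{n\to+\infty}\int_\Omega A(x)\nabla u_n\cdot\nabla\varphi=\int_\Omega A(x)\nabla u\cdot\nabla\varphi.$$
For the singular term, the local lower bound gives the pointwise domination
$$\left|\frac{f_n\varphi}{(u_n+1/n)^\gamma}\right|\leq\frac{\|\varphi\|_{L^\infty(\Omega)}}{c_\omega^\gamma}\,f\in L^1(\Omega),$$
so dominated convergence, together with the a.e. convergence of $u_n$, passes the limit. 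Finally, $\int_\Omega\varphi\,\mu_n\to\int_\Omega\varphi\,d\mu$ by the chosen weak$^*$ convergence of the approximating measures against the continuous compactly supported function $\varphi$.

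The argument is largely structural, since the hard analytical work has already been invested in Lemmas \ref{T_kbounded} and \ref{stima1>1}. The one point worth flagging is that the linearity of the principal part makes weak convergence of $\nabla u_n$ sufficient to pass to the limit, so no a.e. convergence of gradients is required here; this is the exact place where the generalization to a Leray-Lions nonlinearity announced in Section \ref{secintrop} becomes delicate, as one would then need the Boccardo-Murat-type almost everywhere convergence of the gradients.
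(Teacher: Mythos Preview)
Your proof is correct and follows precisely the approach the paper intends: the paper's own proof of Theorem \ref{lab} is the one-line remark that, thanks to Lemmas \ref{T_kbounded} and \ref{stima1>1}, one just re-adapts the proof of Theorem \ref{esistenzagamma<1}, and you have spelled out exactly that re-adaptation (local weak compactness, a.e.\ convergence, dominated convergence on the singular term, weak$^*$ convergence on the measure term, and the identification of $T_k^{(\gamma+1)/2}(u)$ in $H^1_0(\Omega)$ for the boundary condition). Your closing remark on why only weak gradient convergence is needed here, versus the Boccardo--Murat argument required in Section \ref{secintrop}, is also on point.
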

\begin{proof} Thanks to Lemma \ref{T_kbounded} and Lemma \ref{stima1>1} above the proof of Theorem \ref{lab} is just a straightforward readaptation of the one of  Theorem \ref{esistenzagamma<1}.
\end{proof}

\subsection{Regularity of the solution}
In this section we show how the solutions of problem \rife{pba} obtained in the previous sections  increase  their summability depending on the summability of the data.  

\medskip

We have the following
\begin{theorem}
Let $f\in L^m(\Omega)$ and let $\mu\in L^r(\Omega)$. Then there exists a solution $u$ to \rife{pba} such that: 
\begin{itemize}
\item [(i)]if $m,r>\frac{N}{2}$, then $u \in L^{\infty}(\Omega)$,
\item [(ii)]if $1\le m <\frac{N}{2}$, $r>\frac{N}{2}$, then $u \in L^{\frac{Nm(\gamma + 1)}{N-2m}}(\Omega)$,
\item [(iii)]if $m>\frac{N}{2}$, $1< r <\frac{N}{2}$,  then $u \in L^{r^{**}}(\Omega)$,
\item [(iv)]if $1\le m <\frac{N}{2}$, $1< r <\frac{N}{2}$, then $u \in L^{q}(\Omega)$, where $q=\min{(\frac{Nm(\gamma + 1)}{N-2m},r^{**})}$.\\
\end{itemize}
\label{teoreg}
\end{theorem}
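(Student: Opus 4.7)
The plan is to derive, in each of the four regimes, uniform a priori estimates on the approximating sequence $u_n$ from Lemma~\ref{soleqapprox}, and then to pass to the limit using the a.e.\ convergence $u_n\to u$ secured in Theorems~\ref{esistenzagamma<1} and~\ref{lab} together with Fatou's lemma. The three unbounded regimes (ii)--(iv) will all flow from a single \emph{master inequality}, obtained by testing \rife{eqapprox} against $\varphi_\epsilon = (u_n+\epsilon)^{\gamma+\sigma} - \epsilon^{\gamma+\sigma}$ for a suitably chosen $\sigma\ge 0$ and $0<\epsilon<1/n$. Exploiting the crucial bound $(u_n+\epsilon)^{\gamma+\sigma}/(u_n+1/n)^{\gamma}\le (u_n+\epsilon)^{\sigma}$ (valid for $\epsilon\le 1/n$), letting $\epsilon\to 0$ and applying Sobolev and H\"older inequalities, one obtains, with $q := (\gamma+\sigma+1)\frac{N}{N-2}$,
\[
\|u_n\|_{L^q(\Omega)}^{\gamma+\sigma+1} \le C\Big(\|f\|_{L^m(\Omega)}\|u_n\|_{L^{\sigma m'}(\Omega)}^{\sigma} + \|\mu\|_{L^r(\Omega)}\|u_n\|_{L^{(\gamma+\sigma)r'}(\Omega)}^{\gamma+\sigma}\Big).
\]

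The strategy is then to tune $\sigma$ to each case. In case (ii) I would set $\sigma m' = q$, which after a short computation yields $\sigma = \frac{(\gamma+1)(m-1)N}{N-2m}$ and $q = \frac{Nm(\gamma+1)}{N-2m}$; since $r>N/2$ implies $r'<N/(N-2)$, one checks $(\gamma+\sigma)r'<q$, so the $\mu$-term is subordinate via the continuous embedding $L^q(\Omega)\hookrightarrow L^{(\gamma+\sigma)r'}(\Omega)$ and a polynomial comparison closes the inequality (the corner case $m=1$ corresponds to $\sigma=0$ and the $f$-term is bounded directly by $\|f\|_{L^1(\Omega)}$). Case (iii) is symmetric: I would set $(\gamma+\sigma)r' = q$ to obtain $q = r^{**}$ and $\gamma+\sigma = \frac{(r-1)N}{N-2r}$; should this force $\sigma<0$ (i.e.\ $\gamma$ large relative to $(r-1)N/(N-2r)$), one falls back on $\sigma=0$, which yields the stronger estimate $L^{(\gamma+1)N/(N-2)}(\Omega)\subset L^{r^{**}}(\Omega)$. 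In case (iv) neither H\"older partner is automatically subordinate, so the two choices of $\sigma$ must be exploited independently; the claim then follows by taking the smaller of the two resulting exponents.

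Case (i), which lies outside this scheme, is handled by the classical Stampacchia truncation argument: testing \rife{eqapprox} with $G_k(u_n)$ for $k\ge 1$, the inequality $(u_n+\tfrac{1}{n})^\gamma\ge 1$ on $\{u_n>k\}$ bounds the singular term by $\int_\Omega f_n G_k(u_n)$. A Sobolev--H\"older estimate with exponent $p=\min(m,r)>N/2$ then produces the standard super-level-set decay $|\{u_n>h\}|\le C(h-k)^{-2^*}|\{u_n>k\}|^\theta$ with $\theta>1$, whence the Stampacchia iteration lemma yields $\|u_n\|_{L^\infty(\Omega)}\le C$ uniformly in $n$.

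I expect the main difficulty to be bookkeeping rather than conceptual: verifying admissibility of the chosen $\sigma$ in each regime (nonnegativity, and H\"older partners not exceeding $q$) and rigorously justifying the $\epsilon\to 0$ passage in the master inequality (Fatou on the gradient side, dominated convergence on the $f$-side). Once the uniform $L^q$ bounds on $u_n$ are secured, Fatou's lemma applied along the approximating sequence transfers them to $u$; in case (i), the boundedness of $u$ follows by a routine weak-$\ast$ lower semicontinuity argument.
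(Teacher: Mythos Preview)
Your approach is genuinely different from the paper's. Rather than deriving direct $L^q$ estimates via power-type test functions, the paper argues entirely by comparison: it introduces the auxiliary sequences $w_n$ solving $-\operatorname{div}(A\nabla w_n)=\mu_n$ and $v_n$ solving $-\operatorname{div}(A\nabla v_n)=f_n/(v_n+\tfrac1n)^\gamma$, and shows (by testing the differences against $(z_n-u_n)^-$, $(u_n-w_n)^-$, $(u_n-v_n)^-$) that $\max(v_n,w_n)\le u_n\le v_n+w_n$. All four cases then follow immediately from the known regularity of $v$ (from \cite{bo}) and $w$ (from \cite{s}), with no bookkeeping at all.

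Your direct method is sound in cases (i) and (ii), and in cases (iii)--(iv) whenever $\gamma\le \frac{N(r-1)}{N-2r}$. However, your fallback argument for the complementary range $\gamma>\frac{N(r-1)}{N-2r}$ has a genuine gap. You assert that taking $\sigma=0$ yields a bound in $L^{(\gamma+1)N/(N-2)}$; it does not. With $\sigma=0$ the $\mu$-term in your master inequality is $\|\mu\|_{L^r}\|u_n\|_{L^{\gamma r'}}^{\gamma}$, and a short computation shows $\gamma r'>q_0:=(\gamma+1)N/(N-2)$ \emph{exactly} when $\gamma>\frac{N(r-1)}{N-2r}$. Thus the right-hand side involves a strictly higher Lebesgue norm than the left, and the inequality does not close. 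More generally, subordinating the $\mu$-term requires $(\gamma+\sigma)r'\le q=(\gamma+\sigma+1)\tfrac{N}{N-2}$, i.e.\ $\gamma+\sigma\le \frac{N(r-1)}{N-2r}$, which is impossible for \emph{any} $\sigma\ge 0$ once $\gamma$ exceeds this threshold; so both of your ``two choices'' in case (iv) fail simultaneously in this range.

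The gap is repairable within your framework---for instance, test with a function supported on $\{u_n>1\}$ (say $\psi(u_n)=(u_n^\delta-1)^+$), where $(u_n+\tfrac1n)^\gamma\ge 1$ kills the singularity and the $f$- and $\mu$-terms carry the \emph{same} power $u_n^\delta$, so one can set $\delta r'=(\delta+1)\tfrac{N}{N-2}$ regardless of $\gamma$ to obtain the $L^{r^{**}}$ bound. But this is not the argument you wrote, and the paper's comparison method bypasses the issue entirely.
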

\begin{proof} Let  $u_n$ be  the sequence of approximating solutions to  problem \rife{pba} introduced in \rife{eqapprox}.  Let $w_{n}$ be the sequence of  solutions of 
\begin{equation}
\begin{cases}
 \displaystyle -\operatorname{div}(A(x)\nabla w_n) = \mu_n &  \text{in}\, \Omega, \\
 w_n=0 & \text{on}\ \partial \Omega,
\label{s}
\end{cases}
\end{equation}
and let $v_{n}$ be the solutions of 
\begin{equation}
\begin{cases}
 \displaystyle -\operatorname{div}(A(x)\nabla v_n) = \frac{f_n(x)}{(v_n+\frac{1}{n})^\gamma} &  \text{in}\, \Omega, \\
 v_n=0 & \text{on}\ \partial \Omega,
\label{bo}
\end{cases}
\end{equation}
where $f_n$, $\mu_n$ are the truncation at level $n$ of both $f$ and $\mu$.

Let us  define $z_n=w_n+v_n$, thus we have
\begin{equation}
 \displaystyle -\operatorname{div}(A(x)\nabla z_n) = \frac{f_n(x)}{(v_n+\frac{1}{n})^\gamma} + \mu_n \ge \frac{f_n(x)}{(z_n+\frac{1}{n})^\gamma} + \mu_n .
 \label{zn}
\end{equation}
Taking $(z_n - u_n)^-$ as test function in the formulation both \rife{zn} and \rife{eqapprox}, and taking the difference between the two we obtain
$$
\displaystyle \int_{\Omega}A(x) \nabla (z_n-u_n) \cdot  \nabla (z_n-u_n)^- \ge \int_{\Omega} \left(\frac{f_n(x)}{(z_n+\frac{1}{n})^\gamma}-\frac{f_n(x)}{(u_n+\frac{1}{n})^\gamma}\right)(z_n-u_n)^-\ge 0\,,
$$
that implies 
$$
\displaystyle \int_{\Omega} |\nabla (z_n-u_n)^-|^2 \le 0\,.
$$
So we have 
$$
u_n\le z_n = w_n + v_n.
$$
Passing to the limit in $n$ we then have that the summability of the solution $u$ can not be worse than $q$,  where $q$ is the minimum between the summabilities of  $w$ and $v$ that are investigated, respectively, in  \cite{s} and  \cite{bo}. 
To show the optimality of this result what is left is the proof that 
$$
w_n\le u_n, \ \ \text{and}\ \ 		
v_n\le u_n.		
$$
The previous bounds can be found, reasoning as before, taking $(u_n - w_n)^-$ as test functions in  the difference between  \rife{eqapprox} and \rife{s}, and taking $(u_n - v_n)^-$, as test functions in  the difference between  \rife{eqapprox} and \rife{bo}.  
This completes the proof. 
\end{proof}
\begin{remark}
In  Theorem \ref{teoreg} we have explicitly omitted  the case where $r=1$. Let us observe that,  according with  the  Stampacchia regularity  result in \cite{s}, the statement holds true also in this case  provided we substitute   $1^{**}(=\frac{N}{N-2})$ with $1^{**}- \epsilon$, where $\epsilon$ is any strictly positive fixed number.
\end{remark}

\subsection{Further remarks in a model case}
\label{secregular}
 
In this section we consider a less general case, namely $A\equiv I$ and $\gamma<1$; we will show that something more can be said on suitable solutions to \rife{pba} in this Lazer-McKenna type model case.  

We consider  $\Omega$ to be  a bounded open subset of $\mathbb{R}^N (N \ge 2)$ with boundary $\partial \Omega$ of class $C^{2+\alpha}$ for some $0<\alpha<1$.   We consider the following semilinear elliptic problem 
\begin{equation}
	\begin{cases}
		\displaystyle -\Delta u = \frac{f(x)}{u^\gamma}+\mu &  \text{in}\, \Omega, \\
		u=0 & \text{on}\ \partial \Omega,
		\label{pblaplacian}
	\end{cases}
\end{equation}
where $0<\gamma<1$, and $\mu$ is a nonnegative bounded Radon measure on $\Omega$.

Concerning the data $f$, for a fixed $\delta>0$,  let 
$$
\Omega_{\delta}:=\{x\in\Omega: {\rm dist} (x,\partial\Omega)< \delta\}\,,
$$
and consider $f\in \luo\cap L^{\infty}(\Omega_{\delta})$ such that $f>0$ a.e. in ${\Omega}$. Here, with a little abuse of notation, we mean that $f$ is a purely $L^{1}$ function defined on $\Omega$ that is essentially bounded in a neighborhood of $\partial\Omega$. These assumptions can be regarded as a  suitable relaxation of the assumptions in \cite{lm}.

We shall  prove existence and uniqueness of a nonnegative weak solution to problem \rife{pblaplacian}. 
Observe that, to this aim,  the restriction to the case of the laplacian is not only technical. Also in the nonsingular case, uniqueness of distributional solutions for problems as \rife{pb} fails in general (see \cite{se} and \cite{boca} for further results in this direction).  
\\As we will see our argument is strongly based on the fact that, in this case,  a suitable solution to \rife{pblaplacian} can be found satisfying a further regularity property, namely the integrability of the lower order singular term of the equation. This fact is in general false if $\gamma\geq 1$ even in the case $\mu\equiv 0 $. This takes us to consider another notion of solution that is strictly related to the integrability of the singular term.
\medskip

Here is the notion of solution we will consider. 
\begin{defin}\label{defmain}
	A (weak) solution to problem \rife{pblaplacian} is a function $u\in L^1(\Omega)$  such that $u >0$ a.e.  in $\Omega$, $ {f}{u^{-\gamma}}  \in L^1(\Omega)$, and 
	\begin{equation} \displaystyle -\int_{\Omega}u \Delta \varphi =\int_{\Omega} \frac{f\varphi}{u^\gamma} + \int_{\Omega} \varphi d\mu,\ \ \ \forall \varphi \in C^2_0(\overline{\Omega}) .
		\label{weakdeflapl} 
	\end{equation} 
	\label{definition1}
\end{defin}
\begin{remark}
	We recall that by   $C^2_0(\overline{\Omega})$  we mean functions in $\varphi\in C^{2}(\overline{\Omega})$ with $\varphi=0$ on $\partial\Omega$. With this choice of test functions all  terms in \rife{weakdeflapl}  are well defined and  the boundary condition $u=0$ on $\partial\Omega$ is  contained in \rife{weakdeflapl}. Also observe that, due to the uniqueness result we will prove, a  solution $u$ in the above sense will coincide with  a solution in the sense of Definition \ref{priweakdef<=1} provided $ {f}{u^{-\gamma}}  \in L^1(\Omega)$. 
\end{remark}

We will prove the following

\begin{theorem}\label{main}
	Let $f$ be an a.e. positive function in $L^{1}(\Omega)\cap L^{\infty}(\Omega_{\delta})$, for some $\delta>0$. Then there exists a unique solution for problem \rife{pblaplacian} in the sense of Definition \ref{definition1}.  
\end{theorem}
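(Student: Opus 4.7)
Existence is obtained by refining the approximation scheme of Section~\ref{secgen} so as to additionally secure the global integrability $fu^{-\gamma}\in L^{1}(\Omega)$ required by Definition~\ref{defmain}; the core tool is a universal positive lower barrier for the whole sequence $u_n$, with boundary decay no worse than $d(x,\partial\Omega)$. Uniqueness will then follow from a truncation argument on the difference of two solutions, after noting that subtracting the two weak formulations cancels the measure $\mu$ and leaves an $L^{1}$-data semilinear equation whose nonlinearity is monotone in the right direction.

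\textbf{Universal barrier and integrability.} Let $u_n$ and $v_n$ be the approximating solutions of \rife{eqapprox} (with $A\equiv I$) and of \rife{eqboapprox}, respectively. By comparison $u_n\ge v_n$, and testing the difference $v_n-v_{n+1}$ with $(v_n-v_{n+1})^{+}$ gives $v_n\le v_{n+1}$ (using that $n\mapsto f_n$ is nondecreasing and $n\mapsto 1/n$ is nonincreasing). Hence $u_n\ge v_1$ for every $n\ge 1$, where $v_1$ solves $-\Delta v_1=f_1/(v_1+1)^{\gamma}$ with $f_1:=\min(f,1)\in L^{\infty}(\Omega)$, $f_1>0$ a.e. Since this right-hand side is bounded and nontrivial, classical regularity together with $\partial\Omega\in C^{2,\alpha}$ provides $v_1\in C^{1,\beta}(\overline{\Omega})$; the strong maximum principle and Hopf's lemma then yield $v_1(x)\ge c\,d(x,\partial\Omega)$ in $\Omega$. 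Combining this with $f\in L^{\infty}(\Omega_\delta)\cap L^{1}(\Omega)$ and $\gamma<1$,
\[
\intO \frac{f_n}{(u_n+\tfrac{1}{n})^{\gamma}}\,dx \le \frac{\|f\|_{L^{\infty}(\Omega_\delta)}}{c^{\gamma}}\int_{\Omega_\delta}\frac{dx}{d(x,\partial\Omega)^{\gamma}}+\frac{1}{c^{\gamma}}\int_{\Omega\setminus\Omega_\delta} f\,dx \le C,
\]
uniformly in $n$, and Fatou's lemma applied along the a.e.\ convergence $u_n\to u$ given by Theorem~\ref{esistenzagamma<1} yields $fu^{-\gamma}\in L^{1}(\Omega)$. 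The same pointwise bound provides an integrable majorant for $f_n/(u_n+1/n)^{\gamma}$, so dominated convergence, the weak $W^{1,q}_0$ convergence of $u_n$ and the weak-$\ast$ convergence $\mu_n\rightharpoonup\mu$ tested against $C_0(\Omega)$ functions allow us to pass to the limit in the weak formulation of \rife{eqapprox} and, after integration by parts in the principal term (legitimate since $u\in W^{1,q}_0(\Omega)$ and $\varphi=0$ on $\partial\Omega$), obtain \rife{weakdeflapl}.

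\textbf{Uniqueness.} If $u$ and $u'$ are two solutions in the sense of Definition~\ref{defmain}, subtracting their weak formulations eliminates $\mu$ and shows that $w:=u-u'\in L^{1}(\Omega)$ satisfies, in $\mathcal{D}'(\Omega)$,
\[
-\Delta w = f\bigl(u^{-\gamma}-(u')^{-\gamma}\bigr) \in L^{1}(\Omega),
\]
with zero boundary value in the $C^{2}_0(\overline{\Omega})$-test-function sense. By the classical theory of elliptic equations with $L^{1}$ data, $T_k(w)\in H^{1}_0(\Omega)$ for every $k>0$ and $T_k(w)$ is an admissible test function. Since $t\mapsto -t^{-\gamma}$ is increasing on $(0,\infty)$, the right-hand side multiplied by $T_k(w)$ is pointwise non-positive, hence
\[
\intO |\nabla T_k(w)|^{2}\,dx = \intO f\bigl(u^{-\gamma}-(u')^{-\gamma}\bigr)T_k(w)\,dx \le 0,
\]
which forces $T_k(w)\equiv 0$ for every $k>0$, i.e.\ $u\equiv u'$.

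\textbf{Main obstacle.} The delicate step is the construction of the global lower bound $u\ge c\,d(x,\partial\Omega)$, which simultaneously exploits the $C^{2,\alpha}$ smoothness of $\partial\Omega$ (to apply Hopf's lemma), the essential boundedness of $f$ near $\partial\Omega$ together with its a.e.\ positivity (to make $-\Delta v_1$ bounded and nontrivial, hence $v_1\in C^{1,\beta}(\overline{\Omega})$), and the restriction $\gamma<1$, which is precisely what turns the pointwise bound $u^{-\gamma}\le C\,d(x,\partial\Omega)^{-\gamma}$ into an integrable singularity. This is also the reason why the result is confined to the laplacian and to the mildly singular regime $\gamma<1$.
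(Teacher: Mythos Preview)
Your proof is correct but follows a different route from the paper in both parts.

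For \textbf{existence}, the paper invokes the sub--supersolution method (Theorem~\ref{methodsubsup}): it takes the solution $v$ of $-\Delta v=f v^{-\gamma}$ as subsolution and $v+w$ (where $-\Delta w=\mu$) as supersolution, after securing $f v^{-\gamma}\in L^{1}(\Omega)$ via the bound $v\ge v_1\ge C\,d(x,\partial\Omega)$ (obtained there from a Brezis--Cabr\'e estimate rather than Hopf's lemma, but the two are interchangeable in this smooth setting). You instead push the approximation scheme of Section~\ref{secgen} all the way through, using the very same barrier $v_1$ to dominate $f_n(u_n+\tfrac1n)^{-\gamma}$ globally in $L^{1}$ and pass to the limit directly against $C^{2}_{0}(\overline\Omega)$ test functions. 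Your argument is more self-contained (it bypasses Theorem~\ref{methodsubsup} entirely and reuses Theorem~\ref{esistenzagamma<1}); the paper's is shorter once the abstract machinery is in place and makes the ordering $\underline u\le u\le \overline u$ explicit.

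For \textbf{uniqueness}, the paper quotes a Kato-type inequality from \cite{bmp} yielding $\int_\Omega f\,|u_1^{-\gamma}-u_2^{-\gamma}|\le \int_\Omega d|\mu_1-\mu_2|$, from which $u_1=u_2$ follows since $f>0$ a.e. Your entropy argument---recognising $w=u-u'$ as the unique duality solution of $-\Delta w=g\in L^{1}$ and then testing with $T_k(w)$---is equally valid and arguably more elementary. One minor point of presentation: since $T_k(w)\notin C^{2}_{0}(\overline\Omega)$, your displayed identity is not obtained by direct substitution in \rife{weakdeflapl} but by approximation (truncating the datum $g$, testing with $T_k(w_n)$, and passing to the limit); this yields at worst ``$\le$'' in place of ``$=$'', which still forces $T_k(w)\equiv 0$ for every $k$.
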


In order to prove Theorem \ref{main} we need to prove first that a solution in the sense of Definition \ref{defmain} does exist. 
This can be easily checked by  sub and supersolutions method. 
We start by define the concept of  sub and supersolutions for problem \rife{pblaplacian}.
\begin{defin}  \label{subdef}
	A function $\underline{u}$ is a {\itshape subsolution} for \rife{pblaplacian} if $\underline{u} \in L^1(\Omega)$,  $\underline{u} >0$ in $\Omega$, $\displaystyle {f}{\underline{u}^{-\gamma}}  \in L^1(\Omega)$, and 
	\begin{equation}\label{subdefeq}
	\displaystyle -\int_{\Omega}\underline{u} \Delta \varphi \leq \int_{\Omega} \frac{f\varphi}{\underline{u}^\gamma} + \int_{\Omega} \varphi d\mu, \ \ \ \forall \varphi \in C^2_0(\overline{\Omega}), \ \varphi \ge 0.\end{equation}
	Analogously, we say that $\overline{u}$ is a {\it supersolution} for problem \rife{pblaplacian} if $\overline{u} \in L^1(\Omega)$,  $\overline{u} >0$ in $\Omega$, $\displaystyle {f}{\overline{u}^{-\gamma}}  \in L^1(\Omega)$,  and \rife{subdefeq} is satisfied with the opposite inequality sign (i.e. $\geq$).
\end{defin}

\medskip
We are ready to state the result that is the basis of the method of sub and supersolutions for problem \rife{pblaplacian}. The proof is suitable re-adaptation of an argument in \cite{mp} and we will only sketch it.

\begin{theorem}
	If problem \rife{pblaplacian} has a subsolution $\underline{u}$ and supersolution $\overline{u}$ with  $\underline{u} \le \overline{u}$ in $\Omega$, then there exists a solution $u$ to \rife{pblaplacian} (in the sense of Definition \ref{definition1}) such that $\underline{u} \le u \le \overline{u}$.
	\label{methodsubsup}
\end{theorem}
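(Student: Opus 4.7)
The plan is to build a monotone iteration starting from the subsolution and exploit the order structure to pass to the limit. Define $u_0:=\underline{u}$ and, for $n\ge 1$, let $u_n$ be the (unique) solution of the \emph{linear} problem
\begin{equation*}
-\Delta u_n = \frac{f}{u_{n-1}^\gamma} + \mu \quad \text{in } \Omega, \qquad u_n = 0 \text{ on } \partial\Omega.
\end{equation*}
Since $u_0=\underline{u}>0$ and $f\underline{u}^{-\gamma}\in L^1(\Omega)$ by the definition of subsolution, the right-hand side is a nonnegative bounded Radon measure on $\Omega$. Existence (and the appropriate regularity) of $u_n$ is then a classical consequence of duality/Stampacchia theory for the Laplacian with measure data; in particular one has $u_n\in W^{1,q}_0(\Omega)$ for every $q<\frac{N}{N-1}$ and $u_n\ge 0$ by the maximum principle.

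Next I would show, by induction, that $\underline{u}\le u_n\le u_{n+1}\le \overline{u}$ a.e.\ in $\Omega$. The base step $u_1\ge\underline{u}$ follows by testing the inequality satisfied by $u_1-\underline{u}$ with (a suitable smooth approximation of) its negative part: since
\begin{equation*}
-\Delta(u_1-\underline{u}) \ge \frac{f}{\underline{u}^\gamma}+\mu - \Big(\frac{f}{\underline{u}^\gamma}+\mu\Big)=0
\end{equation*}
in the distributional sense, the weak comparison principle for $-\Delta$ with data in $L^1+\mathcal{M}_b$ yields $(u_1-\underline{u})^-=0$. Analogously, comparing $u_1$ with $\overline{u}$ and using the monotonicity of $t\mapsto f/t^\gamma$ one obtains $u_1\le\overline{u}$. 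The inductive step $u_{n}\le u_{n+1}$ then follows in the same way from the monotonicity of the iteration: if $u_{n-1}\le u_n$, then $f u_{n-1}^{-\gamma}\ge f u_n^{-\gamma}$ and the comparison argument applied to $u_{n+1}-u_n$ gives the claim. The inequality $u_{n+1}\le\overline{u}$ comes analogously from $u_n\le\overline{u}$.

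Finally I would pass to the limit. The sequence $u_n$ is monotone nondecreasing and bounded above by $\overline{u}\in L^1(\Omega)$, hence $u_n\uparrow u$ a.e.\ with $\underline{u}\le u\le\overline{u}$ and $u\in L^1(\Omega)$. On the right-hand side the key point is the domination $0\le f u_{n-1}^{-\gamma}\le f\underline{u}^{-\gamma}\in L^1(\Omega)$, which, together with the pointwise convergence $u_{n-1}\to u>0$, allows to invoke dominated convergence so that $f u_{n-1}^{-\gamma}\to f u^{-\gamma}$ in $L^1(\Omega)$ and, in particular, $f u^{-\gamma}\in L^1(\Omega)$. For any $\varphi\in C^2_0(\overline{\Omega})$ the weak formulation of the linear step reads $-\int_\Omega u_n\,\Delta\varphi=\int_\Omega f u_{n-1}^{-\gamma}\varphi+\int_\Omega \varphi\, d\mu$; monotone convergence on the left and dominated convergence on the right yield \eqref{weakdeflapl}.

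The main obstacle I expect is the passage to the limit in the singular lower order term: it is essential to know a priori that the iterates stay above the strictly positive subsolution $\underline{u}$, because without such a lower bound the term $f/u_{n-1}^\gamma$ could blow up and destroy both the $L^1$ control and the domination needed for Lebesgue's theorem. Once the order $u_{n-1}\ge\underline{u}$ is propagated through the iteration by comparison, the rest of the argument is, as announced, a re-adaptation of the scheme in \cite{mp}.
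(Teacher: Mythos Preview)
Your monotone iteration breaks down because the nonlinearity $t\mapsto f(x)/t^{\gamma}$ is \emph{decreasing}, not increasing. Look again at your inductive step: from $u_{n-1}\le u_n$ you correctly deduce $f\,u_{n-1}^{-\gamma}\ge f\,u_n^{-\gamma}$, but then
\[
-\Delta u_n \;=\; f\,u_{n-1}^{-\gamma}+\mu \;\ge\; f\,u_n^{-\gamma}+\mu \;=\; -\Delta u_{n+1},
\]
and the comparison principle yields $u_n\ge u_{n+1}$, the opposite of your claim. The same problem already appears at the base step $u_1\le\overline{u}$: from $-\Delta u_1=f\,\underline{u}^{-\gamma}+\mu$ and $-\Delta\overline{u}\ge f\,\overline{u}^{-\gamma}+\mu$ with $\overline{u}^{-\gamma}\le \underline{u}^{-\gamma}$ you only get $-\Delta(\overline{u}-u_1)\ge f(\overline{u}^{-\gamma}-\underline{u}^{-\gamma})\le 0$, which is the wrong sign for comparison. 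So the sequence is not monotone and need not stay below $\overline{u}$; without the upper barrier your $L^1$ bound and the passage to the limit both collapse.

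The paper avoids this by truncating the nonlinearity between the barriers: set $\tilde g(x,t)=f\,\underline{u}^{-\gamma}$ for $t<\underline{u}$, $f\,t^{-\gamma}$ for $\underline{u}\le t\le\overline{u}$, and $f\,\overline{u}^{-\gamma}$ for $t>\overline{u}$. Then $\tilde g(\cdot,v)\in L^1(\Omega)$ for every $v\in L^1(\Omega)$, a Schauder fixed point on $L^1$ (as in \cite{mp}) furnishes a solution of $-\Delta u=\tilde g(x,u)+\mu$, and Kato's inequality up to the boundary forces $\underline{u}\le u\le\overline{u}$, so the truncation is inactive. If you prefer to keep an iterative flavour, you can either iterate with the truncated $\tilde g$ (now harmless), or run two intertwined sequences from $\underline{u}$ and $\overline{u}$ and control the resulting oscillating but nested iterates; either repair ultimately relies on the same truncation/Kato ingredients the paper uses.
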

\begin{proof}[Sketch of the proof] 
We first considered a truncated problem with a modified nonlinearity $\tilde{g}(x,u)$. The goal consists in proving that the solution of this truncated problem turns out to solve problem \rife{pblaplacian}.

	We define $\tilde{g}:\Omega \times \mathbb{R} \to \mathbb{R}$ as
	$$\tilde{g}(x,t)=
	\begin{cases}
	\displaystyle {f(x)}{\underline{u}(x)^{-\gamma}} & \text{if} \ t < \underline{u}(x)  , \\
	\displaystyle {f(x)}{t^{-\gamma}} & \text{if} \   \underline{u}(x) \le t \le \overline{u}(x) , \\
	\displaystyle {f(x)}{\overline{u}(x)^{-\gamma}} & \text{if} \ t > \overline{u}(x) .
	\end{cases}$$ 
	Notice that by definition of sub and supersolution both $\displaystyle {f}{\underline{u}^{-\gamma}}$ and $\dys{f}{\overline{u}^{-\gamma}}$ belong to $L^1(\Omega)$. Moreover,  $\underline{u}>0$ so that, $\tilde{g}$ is well defined a.e. on $\Omega$ and for every fixed $v \in L^1(\Omega)$ we have that $\tilde{g}(x,v(x)) \in  L^1(\Omega)$.
	
First of all,  if $u$ satisfies	
	\begin{equation}
	\begin{cases}
	-\Delta u =  \tilde{g}(x,u)+\mu &  \text{in}\, \Omega, \\
	u=0 & \text{on}\ \partial \Omega,
	\label{regpb}
	\end{cases}
	\end {equation}
	
	\noindent then $\underline{u} \le u \le \overline{u}$. Thus $\tilde{g}(\cdot,u)=g(u)$, $\displaystyle {f}{u^{-\gamma}} \in L^1(\Omega)$, and $u$ is a solution to \rife{pblaplacian}.
	This fact can be checked by mean  of Kato's inequality up to the boundary (see \cite[Lemma 6.11]{po}) applied to the function $u - \overline{u}$.
The proof is complete if we show that  a solution to problem \rife{regpb} does exist. 
	Let us define 
	$$G:L^1(\Omega) \to L^1(\Omega),$$
	$$v \ \ \to \ \ u.$$
	This map assigns to every $v \in L^1(\Omega)$ the solution $u$ to the following linear problem
	$$
	\begin{cases}
	-\Delta u = \tilde{g}(x,v)+\mu &  \text{in}\, \Omega, \\
	u=0 & \text{on}\ \partial \Omega.
	\end{cases}
	$$ 
	Standard  Schauder fixed point theorem applies and one can easily  verifies that a solution $u$ to \rife{regpb} does exist.   In view of what we said before, this concludes the proof of Theorem \ref{methodsubsup}. 
	\end{proof}
	\medskip\medskip
We are now in the position to prove Theorem \ref{main}. 

\begin{proof}[Proof of Theorem \ref{main}]
As we have $fu^{-\gamma} \in L^1(\Omega)$, then the uniqueness is an easy consequence of Proposition 4.B.3 in \cite{bmp}. In fact, if $u_{1}$ and $u_{2}$ are two solutions of \rife{pblaplacian} in the sense of Definition \ref{definition1} with data $\mu_{1}$ and $\mu_{2}$ respectively,  then one has 
$$
\into f\left|\frac{1}{u_{2}^{\gamma}}-\frac{1}{u_{1}^{\gamma}}\right|\leq \into d|\mu_{1}-\mu_{2}|\,,
$$
from which uniqueness is deduced as $f >0$ in ${\Omega}$. 

\medskip

In order to prove existence we apply Theorem \ref{methodsubsup}. We need to find both  a subsolution and a supersolution to problem \rife{pblaplacian} in the sense of Definition \ref{subdef}.

We first look for a subsolution. Let us consider the following problem
\begin{equation}
\begin{cases}
\displaystyle -\Delta v = \frac{f(x)}{v^\gamma} &  \text{in}\, \Omega, \\
v=0 & \text{on}\ \partial \Omega.
\label{lazmck}
\end{cases}
\end{equation} 
It is proved in \cite{bo}  the existence of a solution $v \in L^1(\Omega)$ to  problem \rife{lazmck}.
We need a sharp estimate near the boundary for $v$. We consider the following approximating problems
$$
\begin{cases}
\displaystyle -\Delta v_n = \frac{f_n(x)}{(v_n+\frac{1}{n})^\gamma} &  \text{in}\, \Omega, \\
v_n=0 & \text{on}\ \partial \Omega,
\end{cases}
$$
where $f_n$ is the truncation at level $n$ of $f$ (i.e. $f_{n}=T_{n}(f)$).
It was  proven in \cite{bo} that the nondecreasing sequence $v_n$ converges to a solution of problem \rife{lazmck}. By the linear theory, the sequence $v_n$ belongs to $L^{\infty}(\Omega)$.
Also observe that the term $\displaystyle \frac{f_1}{(v_1 +1)^\gamma}$ belongs to $L^{\infty}(\Omega)$ so that we can apply Lemma 3.2 in \cite{bc} in order to obtain that for a.e. $x$ in $\Omega$
$$
\displaystyle \frac{v_1 (x)}{d(x)} \ge C\int_{\Omega}\frac{d (y) f_1 (y)}{||v_1||_{L^{\infty}(\Omega)}+1}dy \  \ge C >0, 
$$  
where $d(x):=d(x,\partial\Omega)$ is the distance function of $x$ from $\partial\Omega$.
Thus, we have
$$
\displaystyle v (x)\ge v_1 (x) \ge C_1d(x), \ \text{a.e. on}\ \Omega.
$$ 
Therefore, as $f\in L^{\infty}(\Omega_{\delta})$, then  ${f}{v^{-\gamma}}\le f{d(x)^{-\gamma}}$ is integrable in $\Omega$ for every $\gamma<1$.
Thus we have that $v >0 \text{ in } \Omega$,  ${f}{v^{-\gamma}}  \in L^1(\Omega)$, and
$$ \displaystyle -\int_{\Omega}v \Delta \varphi = \int_{\Omega} \frac{f\varphi}{v^\gamma}, \ \ \ \forall \varphi \in C^2_0(\overline{\Omega}).$$
\\ \\Since $\mu$ is a nonnegative Radon measure we clearly have
$$ \displaystyle -\int_{\Omega}v \Delta \varphi \le \int_{\Omega} \frac{f\varphi}{v^\gamma} + \int_{\Omega} \varphi d\mu, \ \ \ \forall \varphi \in C^2_0(\overline{\Omega}), \ \varphi \ge 0,$$
that is, $v$ is a subsolution to the problem \rife{pblaplacian}.

We now look for a supersolution of problem \rife{pblaplacian}.  Let $w$ be the solution of 
\begin{equation}
\begin{cases}
\displaystyle -\Delta w = \mu &  \text{in}\, \Omega, \\
w=0 & \text{on}\ \partial \Omega.
\label{sta}
\end{cases}
\end{equation} 
The existence of a positive solution to the problem \rife{sta} is classical (see for instance \cite{s} where the solution is obtained by duality in a more general framework). 

Let us define $z:=w+v$ where $v$ is again the solution to \rife{lazmck}. We have
$$ \displaystyle -\int_{\Omega}z \Delta \varphi =  - \int_{\Omega}w \Delta \varphi -\int_{\Omega}v \Delta \varphi = \int_{\Omega} \varphi d\mu + \int_{\Omega} \frac{f\varphi}{v^\gamma}, \ \ \ \forall \varphi \in C^2_0(\overline{\Omega}).$$
Recalling that $w$ is nonnegative, then we have $z^\gamma\ge v^\gamma> 0 $. Thus, we can say
$$ \int_{\Omega} \varphi d\mu + \int_{\Omega} \frac{\varphi}{v^\gamma} \ge \int_{\Omega} \varphi d\mu + \int_{\Omega} \frac{f\varphi}{z^\gamma}, \ \ \ \forall \varphi \in C^2_0(\overline{\Omega}), \ \varphi \ge 0\,,$$
that is,  $z$  is a positive function   in $L^1(\Omega)$ such that  $\displaystyle {z^{-\gamma}} \le {v^{-\gamma}} \in L^1(\Omega)$ and
$$ \displaystyle -\int_{\Omega}z \Delta \varphi \ge   \int_{\Omega} \frac{f\varphi}{z^\gamma}+\int_{\Omega} \varphi d\mu, \ \ \ \forall \varphi \in C^2_0(\overline{\Omega}), \ \varphi \ge 0.$$
Thus, $z$ is a supersolution to \rife{pblaplacian}.
We can then apply  Theorem \ref{methodsubsup} to conclude that there exists a solution $u$ to problem \rife{pblaplacian} in the sense of Definition \ref{defmain}. 
\end{proof}

\begin{remark}
We stress that strict positivity of $f$ in $\Omega$ is only used to deduce uniqueness of a solution. 
For a  purely nonnegative $f$,  then the existence of a solution can be deduced exactly with the same argument.   
\end{remark}

\section{Nonlinear principal part}
\label{secintrop}

In this last section we show how the existence results proved before can be extended to the case of a nonlinear principal part. For a given real number $p$ with $2-\frac{1}{N}<p< N$, let $a:\Omega \times  \rn \to \rn$ be a Carath\'eodory function such that there exist $\alpha,\beta >0$ with  
\begin{equation}
(a(x,\xi)-a(x,\xi^*))\cdot(\xi-\xi^*)>0,
\label{cara1}
\end{equation}
\begin{equation}
a(x,\xi)\cdot\xi>\alpha|\xi|^p,
\label{cara2}
\end{equation}
\begin{equation}
a(x,\xi)\le \beta(c(x) + |\xi|^{p-1}),
\label{cara3}
\end{equation}
for every $\xi,\xi^* \in \mathbb{R}^N$ such that $\xi \neq \xi^*$, for almost every $x$ in $\Omega$, and  $c(x)$ belongs to $L^{p'}(\Omega)$. The operator 
$$
- {\rm div} (a (x,\nabla u))
$$
is a classical Leray-Lions type operator which maps continuously $W^{1,p}_{0}(\Omega)$ into its dual $W^{-1,p'}(\Omega)$ whose simplest model is the $p$-laplacian (i.e. $a(x,\xi)=|\xi|^{p-2}\xi$). 

We will prove existence of a nonnegative weak solution to the following problem 
\begin{equation}
\begin{cases}
 \displaystyle -\operatorname{div}(a(x,\nabla u)) = \frac{f(x)}{u^\gamma}+\mu &  \text{in}\, \Omega, \\
 u=0 & \text{on}\ \partial \Omega,
\label{pbnl}
\end{cases}
\end{equation}
where $\Omega$ is an open bounded subset of $\rn$, $N\ge 2$, $\gamma > 0$, $f$ is a nonnegative function which belongs to $L^1(\Omega)$ and $\mu$ is a nonnegative bounded Radon measure. 

The bound from below for $p$ is a classical technical assumption that guarantees, even for nonsmooth data, that the gradient of the solutions will belong at least to $(L^{1}_{loc}(\Omega))^{N}$.   

Here is  the suitable definition for  weak solutions to \rife{pbnl}.
\begin{defin}
If $\gamma\leq 1$, a weak solution to problem \rife{pbnl} is a function $u\in W^{1,1}_0(\Omega)$  such that 
 \begin{equation} \displaystyle \forall \omega \subset\subset \Omega \ \exists c_{\omega} : u \ge c_{\omega}>0,\label{weakdefnl1}\end{equation}
and such that
 \begin{equation} \displaystyle \int_{\Omega}a(x,\nabla u) \cdot \nabla \varphi =\int_{\Omega} \frac{f\varphi}{u^\gamma} + \int_{\Omega} \varphi d\mu,\ \ \ \forall \varphi \in C^1_c(\Omega).\label{weakdefnl2}\end{equation}
 If $\gamma>1$  a weak solution for problem \rife{pbnl} is a function $u\in W^{1,1}_{loc}(\Omega)$ such that \rife{weakdefnl1} and \rife{weakdefnl2} are satisfied and $T_k^{\frac{\gamma-1+p}{p}}(u)$ belongs to $W^{1,p}_0(\Omega)$ for every fixed $k>0$.
\label{weakdefnl}
\end{defin}

\begin{theorem}
Let $\gamma>0$. Then there exists a weak solution $u$ to \rife{pbnl} in the sense of Definition \ref{weakdefnl}. Moreover:
 \begin{itemize}
 \item [(i)] if $\gamma\leq 1$, then  $u\in W^{1,q}_{0}(\Omega)$ for every $q<\frac{N(p-1)}{N-1}$, 
\item [(ii)] if $\gamma>1$, then $u\in W^{1,q}_{loc}(\Omega)$ for every $q<\frac{N(p-1)}{N-1}$.   \end{itemize}
\label{mainteo}
\end{theorem}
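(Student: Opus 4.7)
The plan is to follow exactly the same blueprint used in Section~\ref{secgen} for the linear case, replacing the operator $-\text{div}(A(x)\nabla \cdot)$ by $-\text{div}(a(x,\nabla \cdot))$ and carefully tracking how the exponent $p$ enters the estimates. First, I would approximate \rife{pbnl} by
$$
-\operatorname{div}(a(x,\nabla u_n)) = \frac{f_n}{(u_n+1/n)^\gamma} + \mu_n \quad \text{in } \Omega, \qquad u_n = 0 \text{ on } \partial\Omega,
$$
with $f_n = T_n(f)$ and $\mu_n$ smooth, bounded in $L^1$, converging weakly to $\mu$. Existence of a nonnegative $u_n\in W^{1,p}_0(\Omega)\cap L^\infty(\Omega)$ follows from Schauder's fixed point theorem applied to the solution operator of the linearized problem (which is well-posed by classical Leray-Lions theory because the right-hand side is bounded for each fixed $n$), combined with the maximum principle. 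The uniform lower bound $u_n \geq c_\omega > 0$ on every $\omega \subset\subset \Omega$ is obtained by comparison with the sequence $v_n$ solving the singular problem without the measure source; for Leray-Lions operators such a comparison estimate is known (this is the $p$-Laplacian analogue of \cite{bo}), and the subtraction argument using $(u_n-v_n)^-$ as test function carries over verbatim using the monotonicity \rife{cara1}.

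For the a priori estimates, in the case $\gamma\leq 1$ I would test with $\varphi=(T_k(u_n)+\varepsilon)^\gamma-\varepsilon^\gamma$ and use \rife{cara2} to obtain
$$
\alpha\int_\Omega |\nabla T_k(u_n)|^p (T_k(u_n)+\varepsilon)^{\gamma-1} \leq C(1+k^\gamma),
$$
which yields $\int|\nabla T_k(u_n)|^p \leq C(k^{1-\gamma}+k)$. Coupling this with the Sobolev embedding and splitting $\{|\nabla u_n|\geq t\} \subset \{|\nabla u_n|\geq t, u_n<k\}\cup\{u_n\geq k\}$, the optimization $k = t^{(N-p)/(N-1)}$ gives $\nabla u_n$ bounded in $M^{N(p-1)/(N-1)}(\Omega)$, hence $u_n$ bounded in $W^{1,q}_0$ for $q<N(p-1)/(N-1)$. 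For $\gamma>1$, testing with $T_k^\gamma(u_n)$ and using ellipticity, together with the identity
$$
|\nabla T_k^{(\gamma+p-1)/p}(u_n)|^p = \Bigl(\tfrac{\gamma+p-1}{p}\Bigr)^p T_k(u_n)^{\gamma-1} |\nabla T_k(u_n)|^p,
$$
delivers the global bound on $T_k^{(\gamma+p-1)/p}(u_n)$ in $W^{1,p}_0$. Then the two-step splitting of Lemma~\ref{stima1>1} (global estimate on $G_1(u_n)$ repeating the Marcinkiewicz argument above with $T_k(G_1(u_n))$ as test function, plus a local estimate on $T_1(u_n)$ using $T_1^\gamma(u_n)$ and the lower bound $u_n\geq c_\omega$) produces the local $W^{1,q}$ bound.

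The main obstacle is the passage to the limit in the nonlinear term $a(x,\nabla u_n)$. Weak $W^{1,q}$ convergence of $u_n$ to some $u$ is not enough: I need a.e.\ convergence of $\nabla u_n$ so that, combined with the $L^{q/(p-1)}_{loc}$ bound on $a(x,\nabla u_n)$ (which holds for $q$ slightly larger than $p-1$, a range nonempty precisely because $N(p-1)/(N-1)>p-1$), Vitali's theorem yields $a(x,\nabla u_n) \to a(x,\nabla u)$ in $L^1_{loc}$. This is the standard difficulty for measure-data problems with Leray-Lions operators and I would handle it by the Boccardo-Murat technique: using $T_\sigma(u_n-T_h(u))$ as a test function (multiplied, if $\gamma>1$, by a cutoff supported in a compact $\omega$ where the lower bound holds) and exploiting the monotonicity \rife{cara1} together with the equi-integrability of the right-hand side to show that $\int_\omega \bigl(a(x,\nabla u_n)-a(x,\nabla u)\bigr)\cdot\nabla(u_n-u)\to 0$, which by a classical lemma of Boccardo-Murat-Puel implies $\nabla u_n\to\nabla u$ a.e.\ on $\omega$, and hence a.e.\ on $\Omega$ by a diagonal argument. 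On the singular term, the uniform lower bound $u_n\geq c_\omega$ on the support of $\varphi\in C^1_c(\Omega)$ yields a dominated convergence argument identical to the one in Theorem~\ref{esistenzagamma<1}, and the measure term passes to the limit by the smoothness of the chosen test functions. The boundary condition in the case $\gamma>1$ is recovered by weak lower semicontinuity from the uniform estimate on $T_k^{(\gamma+p-1)/p}(u_n)$ in $W^{1,p}_0$.
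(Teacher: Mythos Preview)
Your proposal is correct and follows the same approximation blueprint as the paper, but diverges in two technical choices that are worth noting.

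For the $W^{1,q}_0$ estimate when $\gamma\leq 1$, you extend the Marcinkiewicz/level-set argument of Lemma~\ref{bounded<1} directly to exponent $p$ (with the optimal choice $k=t^{(N-p)/(N-1)}$); the paper instead splits $u_n = T_1(u_n)+G_1(u_n)$, tests with $T_k(G_1(u_n))$ and with an auxiliary piecewise-linear function $S_k(G_1(u_n))$ to obtain the pair of estimates $\int_\Omega|\nabla T_k(G_1(u_n))|^p\le Ck$ and $\int_{\{k\le G_1(u_n)\le k+1\}}|\nabla G_1(u_n)|^p\le C$, and then invokes the machinery of \cite[Section II.4]{bg}. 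Both routes are standard; yours is actually closer in spirit to the linear section of this paper, while the paper's choice dispenses with the level-set splitting in favour of a black-box reference.

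For the a.e.\ convergence of gradients, you propose testing the approximating equation itself with $T_\sigma(u_n-T_h(u))$ (times a cutoff). The paper takes a different path: it passes to the equation satisfied by $T_k(u_n)$, which by \cite[Remark~2.32]{dmop} carries an extra nonnegative diffuse measure $\lambda_{n,k}$ concentrated on $\{u_n=k\}$; after bounding $\lambda_{n,k}(\Omega)\le C k^{-\gamma}$ (via $T_k^\gamma(u_n)$ as test function in both the original and the truncated equations), the right-hand side is locally bounded in $L^1$ plus a bounded measure, and \cite[Theorem~2.1]{bm} yields $\nabla T_k(u_n)\to\nabla T_k(u)$ in $L^q_{loc}$ for every $q<p$. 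Your approach avoids the truncated equation and the measure $\lambda_{n,k}$ altogether, which is somewhat more direct. One small correction, though: the singular term $f_n/(u_n+\tfrac1n)^\gamma$ is only uniformly bounded in $L^1_{loc}(\Omega)$, not in $L^1(\Omega)$, so the cutoff $\phi$ is needed in \emph{both} regimes, not only for $\gamma>1$; and ``equi-integrability of the right-hand side'' is not quite right (the $\mu_n$ are merely $L^1$-bounded), but this is harmless since $|T_\sigma(\cdot)|\le\sigma$ already makes the right-hand side $O(\sigma)$ against the local $L^1$ bound.
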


\begin{proof}
The proof of Theorem \ref{mainteo} strictly follows the main steps of the previous section. We will then sketch it by enlightening the main differences. Estimates will essentially involve ellipticity and so they will be formally very similar to the ones in Section  \ref{secgen}. The main issue in this case will be to pass to the limit in the principal part of the approximating solutions for which we will need to prove the almost everywhere convergence of the gradients that will be based on monotonicity arguments relying on \rife{cara1}.  

\medskip

{\bf Step $1$. Existence for the  approximating  problems.} 
 Let us consider the following problem
\begin{equation}
\begin{cases}
 \displaystyle -\operatorname{div}(a(x,\nabla u_n)) =\frac{f_n}{(u_n+ \frac{1}{n})^\gamma}+\mu_n &  \text{in}\, \Omega, \\
 u_n=0 & \text{on}\ \partial \Omega,
\label{eqapproxnp}
\end{cases}
\end{equation}
where, as before, $f_n$ is the truncation at level $n$ of $f$ and $\mu_n$ is a sequence of smooth functions bounded in $L^1(\Omega)$ and  converging weakly to $\mu$ in the sense of the measures. 

The proof of existence  of a weak solution of problem \rife{eqapproxnp} for every fixed $n \in \mathbb{N}$ is formally identical to the one in Lemma \ref{soleqapprox}. We define the operator 
$$G:L^p(\Omega) \to L^p(\Omega)\,,$$
that  assigns to every $v \in L^p(\Omega)$ the solution $w$ to the following problem
$$
\begin{cases}
 \displaystyle -\operatorname{div}(a(x,\nabla w)) =\frac{f_n}{(|v|+ \frac{1}{n})^\gamma}+\mu_n &  \text{in}\, \Omega, \\
 w=0 & \text{on}\ \partial \Omega\,.
\end{cases}
$$
A straightforward re-adaptation of  the proof of Lemma \ref{soleqapprox} allows us to prove both that the ball of radius $C'\left (n^{\gamma+1}+C(n)\right)$ is invariant for $G$ and the set $G(L^p(\Omega))$ is relatively compact in $L^p(\Omega)$. 
Concerning the continuity of $G$ we use monotonicity.   Let us choose a sequence $v_k$ that converges to $v$ in $L^p(\Omega)$, then we need to prove that $G(v_k)$ converges to $G(v)$ in $L^p(\Omega)$. By compactness we already know that the sequence $G(v_k)$ converges to some function $w$ in $L^p(\Omega)$. We only need to prove that $w=G(v)$. This means that we need to pass to the limit with respect to $k$ in the following weak formulation

\begin{equation}
 \displaystyle \int_{\Omega}a(x,\nabla w_k) \cdot \nabla \varphi =\int_{\Omega} \frac{f_n\varphi}{(|v_k|+\frac{1}{n})^\gamma} + \int_{\Omega}  \mu_n \varphi,
\label{schauder1}
\end{equation}
where $\varphi \in W^{1,p}_0(\Omega)$ and $w_k=G(v_k)$.\\
All terms but the one on the left hand side of \rife{schauder1} pass to the limit. 
Concerning the term on the left hand side we need to check  the almost everywhere convergence of the gradients of $w_k$.

We take $w_k-w$ as test function in the weak formulation of \rife{eqapproxnp}
\begin{eqnarray}
\label{schauder2}
 \displaystyle \int_{\Omega}(a(x,\nabla w_k)-a(x,\nabla w)) \cdot \nabla (w_k-w) =\int_{\Omega} \frac{f_n(w_k-w)}{(|v_k|+\frac{1}{n})^\gamma}    \\ \nonumber + \int_{\Omega}  \mu_n (w_k-w) - \int_{\Omega}a(x,\nabla w) \cdot \nabla (w_k-w).
\end{eqnarray}
Since $w_k$ converges to $w$ in $L^p(\Omega)$, then the first and the second term on the right hand side of \rife{schauder2} tends to zero when $k$ tends to infinity. 
Also the third term tends to zero since, by classical theory (see for instance \cite{ll}), we have that $w_k-w$ weakly converges to zero in $W^{1,p}_{0}(\Omega)$ and $a(x,\nabla w)$ is a fixed function in $(L^{p'}(\Omega))^{N}$. This means that the term on the left hand side of \rife{schauder2} tends to zero so that we can apply Lemma 5 in \cite{bmp1} to obtain that $\nabla w_k$ converges almost everywhere to $\nabla w$. This means that $w=G(v)$.
We can then  apply Schauder fixed point theorem and maximum principle in order to get the existence of a nonnegative solution  in $W^{1,p}_0(\Omega)$ for problem \rife{eqapproxnp}. Moreover, by classical regularity theory (see for instance \cite{bmp1}), $u_n$ belongs to $L^\infty(\Omega)$. 

\medskip

{\bf Step $2$. Local uniform bound from below.} 
Here we  show that $u_n$ is bounded from below on the compact subsets of $\Omega$. 
In particular, we want  to check that the sequence $u_n$ is such that for every $\omega \subset\subset \Omega$ there exists $c_{\omega}$ (not depending on $n$) such that 
\begin{equation} \label{stimola} \displaystyle u_n(x) \ge c_{\omega}>0,\text{   for a.e. $x$ in $\omega$, for every n in $\mathbb{N}$.}\end{equation}

We consider the sequence of problems
\begin{equation}
\begin{cases}
 \displaystyle -\operatorname{div}(a(x,\nabla v_n)) =\frac{f_n}{(v_n+ \frac{1}{n})^\gamma} &  \text{in}\, \Omega, \\
 v_n=0 & \text{on}\ \partial \Omega.
\label{eqdcapprox}
\end{cases}
\end{equation}
It was proved in \cite{dc} the existence of a weak solution for \rife{eqdcapprox} such that
$$ \displaystyle \forall \omega \subset\subset \Omega \ \exists c_{\omega} : v_n \ge c_{\omega}>0,$$
for almost every $x$ in $\omega$ and where $c_{\omega}$ is indipendent on $n$. 
\\ Thus, taking $(u_n-v_n)^-$ as test function in the difference between  \rife{eqapproxnp} and \rife{eqdcapprox}, we obtain
$$\begin{array}{l} \displaystyle \int_{\Omega}(a(x,\nabla u_n)-a(x,\nabla v_n))\cdot\nabla (u_n-v_n)^- \\\\ \dys = \int_{\Omega} \left(\frac{f_n(x)}{(u_n+\frac{1}{n})^\gamma}-\frac{f_n(x)}{(v_n+\frac{1}{n})^\gamma}\right)(u_n-v_n)^- + \int_{\Omega} \mu_n(u_n-v_n)^- \ge 0,\end{array}$$
so that, by  \rife{cara1} 
$$ \displaystyle 0 \ge -\int_{\{u_n\le v_n\}} (a(x,\nabla u_n)-a(x,\nabla v_n))\cdot\nabla (u_n-v_n) \ge 0.$$
This implies, by monotonicity, that
$$\nabla (u_n-v_n)^-=\nabla (u_n-v_n)\chi_{\{u_n\le v_n \} }=0\,,$$
so that, 
$$v_{n}\leq u_{n}\,,$$
and so
$$
 \displaystyle \forall \omega \subset\subset \Omega \ \exists c_{\omega} : u_n \ge v_n \ge c_{\omega}>0,
$$
for almost every $x$ in $\omega$.

{\bf Step $3$. Estimates on the approximating solutions.} 
Here we look for some estimates on $u_n$ in some Sobolev spaces.  As in the semilinear case these estimates will depend on the value of $\gamma$. 

First of all we introduce an auxiliary function that will be useful for our purpose
$$S_{k}(s) = \begin{cases} 1& s> k+1,\\
							s-k& k< s\le k+1,\\
							0& 0\le s \le k.\\
 \end{cases}$$
We observe that if $s\ge 0$ then $S_{k}(s)\le s$.

{\bf The case $\gamma \leq 1$.} We will prove that $u_n$ is bounded in $W^{1,q}_0(\Omega)$ for every $q<\frac{N(p-1)}{N-1}$.

As in the semilinear case we first  observe that the truncations of the approximating solutions are bounded in the energy space $W^{1,p}_{0}(\Omega)$. In fact, we take $(T_k(u_n) +\epsilon)^\gamma - \epsilon^\gamma$ as test function in the weak formulation of \rife{eqapproxnp} where $\epsilon$ is a fixed number strictly smaller than $\frac{1}{n}$ (the case $\gamma=1$ implies the obvious simplifications).  
Reasoning as in the proof of Lemma \ref{bounded<1} we readily obtain
 $$ \displaystyle \int_{\Omega}|\nabla T_k(u_n)|^p \le C(k^{1-\gamma}+k).$$

In particular 
$$
 \displaystyle \int_{\Omega}|\nabla T_1(u_n)|^p \le C\,.
$$

In order to get an estimate for $u_{n}$ in  $W^{1,q}_0(\Omega)$ for every $q<\frac{N(p-1)}{N-1}$ it suffices  to look for this bound on $G_{1}(u_{n})$.

We take $T_k(G_1(u_n))$ as test function in the weak formulation of \rife{eqapproxnp} for $k>1$. Then we have 
$$
 \displaystyle  \int_{\Omega}a(x,\nabla u_n)\cdot\nabla T_k(G_1(u_n)) \le \int_{\Omega} \frac{f_n[(T_k(G_1(u_n))]}{(u_n+\frac{1}{n})^{\gamma}} + \int_{\Omega}  \mu_n T_k(G_1(u_n)) \le Ck,
$$
and also
$$
 \displaystyle  \int_{\Omega}a(x,\nabla u_n)\cdot\nabla T_k(G_1(u_n)) = \int_{\Omega}a(x,\nabla T_k(G_1(u_n)))\cdot\nabla T_k(G_1(u_n)) \ge \alpha \int_{\Omega} |\nabla T_k(G_1(u_n))|^ p,
$$
so that we obtain
\begin{equation}
 \displaystyle \int_{\Omega} |\nabla T_k(G_1(u_n))|^ p \le Ck\,.
\label{stima1nl<1}
\end{equation}
Now we take $S_{k}(G_1(u_n))$ as test function again in the weak formulation of \rife{eqapproxnp}, so that 
$$
 \displaystyle  \int_{\Omega}a(x,\nabla u_n)\cdot \nabla G_1(u_n) S_{k} '(G_1(u_n)) \le C,
$$
and again
$$\begin{array}{l}
 \displaystyle  \int_{\Omega}a(x,\nabla u_n)\cdot \nabla G_1(u_n)S_{k} '(G_1(u_n)) \\ \\ \dys = \int_{\{k\le G_1(u_n) \le k+1\}} a(x,\nabla G_1(u_n))\cdot \nabla G_1(u_n) \ge \int_{\{k\le G_1(u_n) \le k+1\}} |\nabla G_1(u_n)|^p,
\end{array}$$
and finally
\begin{equation}
 \displaystyle \int_{\{k\le G_1(u_n) \le k+1\}} |\nabla G_1(u_n)|^p \le C.
\label{stima2nl<1}
\end{equation}
Thanks to estimates  \rife{stima1nl<1} and  \rife{stima2nl<1}  we can proceed as in Section II. 4 of \cite{bg} in order to obtain that $G_1(u_n)$ is bounded in $W^{1,q}_0(\Omega)$ for every $q<\frac{N(p-1)}{N-1}$. 
Thus we can conclude  that $u_n$ is bounded in $W^{1,q}_0(\Omega)$ for every $q<\frac{N(p-1)}{N-1}$.

{\bf The case $\gamma>1$.} As in  the semilinear case, here we only look for local estimates.
First of all we observe that the (global) bound on $G_1(u_n)$ in $W^{1,q}_0(\Omega)$ for every $q<\frac{N(p-1)}{N-1}$ follows exactly as in the the case $\gamma\leq 1$. 

What is left is the proof that  $T_k(u_n)$ is bounded in $W^{1,p}_{loc}(\Omega)$ for every $k>0$. We take $\varphi = T_k^\gamma(u_n)$  as test function in the weak formulation of \rife{eqapproxnp} in order to have 
$$
 \displaystyle  \int_{\Omega}a(x,\nabla u_n)\cdot \nabla T_k(u_n) T_k^{\gamma-1}(u_n)\le C + Ck^\gamma,
$$
so that using  \rife{stimola} we obtain
$$
 \displaystyle  \int_{\Omega}a(x,\nabla u_n)\cdot \nabla T_k(u_n) T_k^{\gamma-1}(u_n)\ge \alpha \int_{\Omega} |\nabla T_k(u_n)|^p T_k^{\gamma-1}(u_n) \ge C\int_{\omega} |\nabla T_k(u_n)|^p,
$$
where $\omega$ is an arbitrary compact subset of $\Omega$. Thus, we have 
$$
 \displaystyle \int_{\omega} |\nabla T_k(u_n)|^ p \le C + Ck^\gamma,
$$
for every $\omega \subset\subset \Omega$ and every $k>0$.
\\ Combining the estimates on both $G_{1}(u_{n})$ and $T_{1}(u_{n})$ we deduce that $u_{n}$ is uniformly bounded in $W^{1,q}_{loc}(\Omega)$ for every $q<\frac{N(p-1)}{N-1}$.

In order  to give sense to the function on the boundary of $\Omega$ we reason as before, we take 
$\varphi = T_k^\gamma(u_n)$  as test function in the weak formulation of \rife{eqapproxnp} and we use ellipticity in order to get 
$$
 \displaystyle  \int_{\Omega} |\nabla T_k^{\frac{\gamma-1+p}{p}}(u_n)|^p\leq C(1+k^{\gamma})\,.
$$

Thanks to the estimates we proved here we readily deduce, by compact embeddings, the existence of a function $u$  such that (up to not relabeled subsequences) $u_{n}$ converges to $u$ a.e.,  strongly  in $L^{1}(\Omega)$ and weakly in $W^{1,q}_{0}(\Omega)$ ($W^{1,q}_{loc}(\Omega)$ if $\gamma>1$) for every $q<\frac{N(p-1)}{N-1}$. In particular  $a(x,\nabla u_n)$ is bounded in $W^{1,q}_{loc}(\Omega)$ for every $q<\frac{N}{N-1}$. So that in order to pass to the limit in \rife{eqapproxnp} and to conclude the proof of Theorem \ref{mainteo} we only need to check the a.e. convergence of $\nabla u_{n}$ towards $\nabla u$.

{\bf Step $4$. The a.e. convergence of the gradients.} The a.e. convergence of $\nabla u_{n}$ towards $\nabla u$ follows in a standard way if  we prove that  $\nabla T_k(u_n)$ converges to $\nabla T_k(u)$ in $L_{loc}^q(\Omega)$ for every $q<p$, for every $k>0$.

By Definition 2.29 and Remark 2.32 in \cite{dmop} we know that $T_k(u_n)$ is such that
\begin{equation}
\displaystyle -\operatorname{div}(a(x,\nabla T_k(u_n))) =\left(\frac{f_n}{(|u_n|+ \frac{1}{n})^\gamma} + \mu_n\right)\chi_{\{|u_n|\le k\}} + \lambda_{n,k} \ \ \   \text{in}\, \Omega,
\label{aeconvnl}
\end{equation}
where $\lambda_{k,n}$ is a nonnegative diffuse measure (i.e. it is an absolutely continuous measure with respect to the $H^1$-capacity) concentrated on the set $\{u_n=k\}$.

The first term on the right hand side of \rife{aeconvnl} is bounded in $L^1_{loc}(\Omega)$. 

To bound the second term we take $T_k^\gamma(u_n)$ as a test function in the weak formulation of \rife{eqapproxnp} to have 
\begin{equation}
\int_{\Omega}a(x,\nabla u_n)\cdot\nabla T_k(u_n) T_k^{\gamma-1}(u_n) \le C||f||_{L^1(\Omega)} + k^\gamma||\mu_n||_{L^1(\Omega)}.
\label{aeconv1nl}
\end{equation} 
Now we take $T_k^\gamma(u_n)$ as a test function in the weak formulation of \rife{aeconvnl} 
\begin{equation}
\begin{array}{l}
\dys \int_{\Omega}a(x,\nabla T_k(u_n))\cdot\nabla T_k(u_n) T_k^{\gamma-1}(u_n) \\ \\ \dys = \int_{\{|u_n|\le k\}}\left(\frac{f_n}{(|u_n|+ \frac{1}{n})^\gamma} + \mu_n\right)T_k^\gamma(u_n) + k^\gamma\lambda_{n,k}(\Omega).
\label{aeconv2nl}\end{array}
\end{equation}  
Since the first term on the right hand side of \rife{aeconv2nl} is positive then the estimates \rife{aeconv1nl} and  \rife{aeconv2nl} imply 
$$\lambda_{n,k}(\Omega) \le \frac{C}{k^\gamma},
$$
so that $\lambda_{n,k}$ is uniformly bounded in as a measure with respect to $n$, for every fixed $k\geq 1$.
 Due to these bounds on the right hand side of \rife{aeconvnl}, as $T_k(u_n)$ is bounded in $W^{1,p}_{loc}(\Omega)$,  we can proceed as in the proof of Theorem 2.1 in \cite{bm}. In fact, one can obtain that
$$
\limsup_{n} \int_{\omega} (a(x,\nabla T_{k}(u_{n}))-a(x,\nabla T_{k}(u)))\cdot\nabla T_{h}(T_{k}(u_{n})-T_{k}(u))\leq C_{\omega,k}h\,,
$$
for every $\omega\subset\subset\Omega$ and $h>0$.  The previous estimate is known to imply (see again  \cite{bm})  that $\nabla T_k(u_n)$ converges to $\nabla T_k(u)$ in $L_{loc}^q(\Omega)$ for every $q<p$. 
\bk

\end{proof}

\begin{remark}
We point out  that, for the sake of exposition, we assumed that the operator $a$ was  chosen to be independent of  $u$. Anyway one can easily realize that the same proof can be straightforwardly extended to more general Leray-Lions operator involving Carath\'eodory functions $a:\Omega \times  \mathbb{R} \times \rn \to \rn$ such that
$$
(a(x,s,\xi)-a(x,s,\xi^*))\cdot(\xi-\xi^*)>0,
$$
$$
a(x,s,\xi)\cdot\xi>\alpha|\xi|^p,
$$
$$
a(x,s,\xi)\le \beta(c(x) + |s|^{p-1}\bk + |\xi|^{p-1}),
$$
for every $\xi,\xi^* \in \mathbb{R}^N$ such that $\xi \neq \xi^*$, for almost every $x$ in $\Omega$, $s\in \mathbb{R}$,  $0<\alpha\leq \beta$, and  $c(x)$ in $L^{p'}(\Omega)$.
\end{remark}

\subsection*{Acknowledgement}
The authors would like to thank the anonymous reviewers whose comments and suggestions helped to improve the final version of this manuscript. 
Also, the authors  are  partially supported by  the Gruppo Nazionale per l'Analisi Matematica, la Probabilit\`a e le loro Applicazioni (GNAMPA) of the Istituto Nazionale di Alta Matematica (INdAM).

\end{document}